\newtheorem{theorem}{Theorem}[section]
\newtheorem{lemma}[theorem]{Lemma}
\newtheorem{proposition}[theorem]{Proposition}
\newtheorem{corollary}[theorem]{Corollary}
\theoremstyle{definition}
\newtheorem{definition}[theorem]{Definition}
\newtheorem{example}[theorem]{Example}
\newtheorem{question}[theorem]{Question}
\theoremstyle{remark}
\newtheorem{remark}[theorem]{Remark}
\numberwithin{equation}{section}
\newcommand{\R}{\mathbb{R}}
\newcommand{\dtext}{\textnormal d}
\DeclareMathOperator{\diam}{diam}
\DeclareMathOperator{\dist}{dist}
\DeclareMathOperator{\re}{Re}
\DeclareMathOperator{\im}{Im}
\DeclareMathOperator{\loc}{loc}
\def\XXint#1#2#3{{\setbox0=\hbox{$#1{#2#3}{\int}$}
\vcenter{\hbox{$#2#3$}}\kern-.5\wd0}}
\def\le{\leqslant}
\def\ge{\geqslant}
\begin{document}

\title{Dynamics of  Quasiconformal  Fields}

\author{Tadeusz Iwaniec}
\address{Department of Mathematics, Syracuse University, Syracuse,
NY 13244, USA}
\email{tiwaniec@syr.edu}
\thanks{Iwaniec was supported by the NSF grant DMS-0800416.}

\author{Leonid V. Kovalev}
\address{Department of Mathematics, Syracuse University, Syracuse,
NY 13244, USA}
\email{lvkovale@syr.edu}
\thanks{Kovalev was supported by the NSF grant DMS-0700549.}

\author{Jani Onninen}
\address{Department of Mathematics, Syracuse University, Syracuse,
NY 13244, USA}
\email{jkonnine@syr.edu}
\thanks{Onninen was supported by the NSF grant  DMS-0701059.}

\subjclass[2000]{Primary 34A12; Secondary 30C65, 34A26, 34C12}

\date{November 20, 2008}

\keywords{Autonomous system, uniqueness of solutions, quasiconformal fields, monotonicity}

\begin{abstract}
A uniqueness theorem is established for autonomous systems of ODEs, $\dot{x}=f(x)$, where $f$ is a Sobolev vector field with additional geometric structure, such as delta-monoticity or reduced quasiconformality. Specifically, through every non-critical  point of $f$ there passes a unique integral curve.
\end{abstract}

\maketitle
 \tableofcontents

\section{Introduction and Overview}
Let  $f \colon \Omega \to \R^n$ be a continuous vector field  defined in a domain $\Omega \subset \R^n$.  We shall consider    the associated autonomous system of ordinary differential equations with given initial data
\begin{equation}\label{IVP}
\dot{x}(t)=f\big(x(t)\big)\, , \quad \quad x(t_0)=x_0 \in \Omega
\end{equation}
By virtue of Peano's Existence Theorem, the system admits a local solution; that is, a solution defined in an open interval containing $t_0$, in which we have $x(t)\in \Omega$. However, uniqueness of the local solution is not always guaranteed. Every local solution $x=x(t)$ can be extended (as a solution in $\Omega$) to its  maximal interval of existence, say for $t\in (\alpha, \beta)$   where $-\infty \leqslant \alpha <  \beta \leqslant \infty$. Such  an interval will, of course, depend upon the choice of extension of the local solution. The  limits $\lim\limits_{t \searrow \alpha} x(t)$ and $\lim\limits_{t \nearrow \beta} x(t)$, if exist in $\Omega$, are the critical points of $f$; that is, zeros of $f$. The classical theory of ODEs tells us that Lipschitz vector fields admit unique local solutions; for less regular fields the solutions are seldom unique,  see   \cite[Ch. I Corollary 6.2]{Habook} for related results.   In the present paper, we address the uniqueness question  under significantly weaker regularity hypothesis on $f$.
We work with fields $f$ that are locally in Sobolev class $W^{1,p}$ for some $n<p<\infty$. The DiPerna-Lions theory (see~\cite{DL}, \cite{Am}
and references therein) establishes the existence and uniqueness of suitably generalized flow for Sobolev fields under certain restrictions
on their divergence. Our results are different in that we obtain the uniqueness of solutions in the classical sense, for all initial values
except for critical points. In order to achieve this, the geometry of $f$ (e.g., quasiconformality or monotonicity)
must come into play. It should be noted that the fruitful connection between the theory of ODEs and
geometric function theory has a long history \cite{Ah, BHS, Re, Sa, Se}.

It is easily seen that monotonicity of $f$ yields backwards uniqueness  \cite[Ch. III Theorem 6.2]{Habook}.
\begin{definition}
A continuous vector field $f\colon \Omega \to \R^n$ is said to be {\it monotone} if
\begin{equation}\label{mon}
\left\langle f(a)-f(b)\, , \, a-b \right\rangle \geqslant 0 \quad \textnormal{ for every } a,b \in \Omega
\end{equation}
It is {\it strictly monotone} if equality occurs only for $a=b$.
\end{definition}
\begin{proposition}$\textnormal{\textsc{(Backward Uniqueness)}}$
Suppose $f \colon \Omega \to \R^n$  is monotone and $x=x(t)$ and $y=y(t)$ are solutions to the system (\ref{IVP}) in $\Omega$. Then the distance between them,  $t \to |x(t)-y(t)|$, is nondecreasing. In particular, if $x(t_0)=y(t_0)$, then $x(t)=y(t)$ for all values   $t\leqslant t_0$ in the range of existence of $x(t)$ and $y(t)$.
\end{proposition}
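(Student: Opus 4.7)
The plan is to differentiate the squared distance $\varphi(t) := \abs{x(t)-y(t)}^2$ and exploit the monotonicity inequality (\ref{mon}) pointwise along the two solutions. Since $x$ and $y$ are integral curves of the continuous field $f$, both are $C^1$, so $\varphi$ is $C^1$ on the common interval of existence.

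First I would compute
\begin{equation*}
\dot\varphi(t) = 2\inn{x(t)-y(t),\,\dot x(t)-\dot y(t)} = 2\inn{x(t)-y(t),\,f(x(t))-f(y(t))}.
\end{equation*}
Setting $a=x(t)$ and $b=y(t)$ in (\ref{mon}) gives $\dot\varphi(t)\ge 0$ at every $t$ in the common interval of existence. Hence $\varphi$ is nondecreasing, and therefore so is its nonnegative square root $t\mapsto\abs{x(t)-y(t)}$, which is the first assertion.

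For the backward uniqueness statement, suppose $x(t_0)=y(t_0)$, so $\varphi(t_0)=0$. Because $\varphi\ge 0$ everywhere and $\varphi$ is nondecreasing, any $t\le t_0$ lying in the common range of existence satisfies $0\le\varphi(t)\le\varphi(t_0)=0$, whence $x(t)=y(t)$. Using continuity of $x$ and $y$, this identity extends to the full backward intersection of their maximal intervals.

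I do not expect any serious obstacle: the argument is essentially a one-line chain-rule computation together with the definition of monotonicity. The only small point worth noting is that continuity of $f$ alone suffices to make $\dot x(t)-\dot y(t)=f(x(t))-f(y(t))$ well-defined and continuous, so no Sobolev regularity, no chain rule for weakly differentiable maps, and no absolute-continuity arguments are needed here; the Sobolev hypothesis will only be called upon for the forward-uniqueness results later in the paper.
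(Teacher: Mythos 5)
Your argument is exactly the paper's: differentiate $|x(t)-y(t)|^2$, use $\dot x - \dot y = f(x)-f(y)$ and the monotonicity inequality to see the derivative is nonnegative, and conclude backward uniqueness from $\varphi(t)\le\varphi(t_0)=0$ for $t\le t_0$. The proposal is correct and matches the paper's proof, with only the (accurate) added remark that continuity of $f$ suffices here.
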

We include a short proof of this proposition,  mainly to keep the exposition as self contained as possible.
\begin{proof}
We have
\begin{equation}
\frac{\dtext}{\dtext t} \left|x (t)-y(t)\right|^2 =2 \left\langle \dot{x}-\dot{y}\, , \, x-y \right\rangle  = 2 \left\langle f(x)-f(y)\, , \, x-y \right\rangle \geqslant 0 \nonumber
\end{equation}
Hence, for $t \leqslant t_0$ we obtain  $\left|x (t)-y(t)\right| \leqslant \left|x (t_0)-y(t_0)\right|=0$.
\end{proof}

In general,  forward uniqueness fails for monotone fields (Example \ref{Ex1}), although it holds for almost every initial value~\cite{Ce}.
However, we shall prove that forward uniqueness for $\delta$-monotone fields, holds for every noncritial initial value.
\begin{definition}
A vector field $f \colon \Omega \to \R^n$ is called {\it $\delta$-monotone}, $0< \delta \leqslant 1$, if for every $a,b \in \Omega$
\begin{equation}\label{demon}
\left\langle f(a)-f(b)\, , \, a-b \right\rangle \geqslant  \delta \left| a-b \right| \left|f(a)-f(b)\right|
\end{equation}
\end{definition}
Note that there is no supposition of continuity here. In fact,  a nonconstant $\delta$-monotone mapping is not only continuous but also   a $K$-quasiconformal homeomorphism \cite{K}, see Section \ref{SecQS} for the definition of $K$-quasiconformality.
\begin{theorem}\label{ThDelMon}   Let $f \colon \Omega \to \R^n$ be nonconstant $\delta$-monotone. Then the initial value problem
\begin{equation}
\dot{x}(t)=f\big(x(t)\big)\, , \quad \quad x(0)=x_0\in \Omega
\end{equation}
admits unique local solution, provided $f(x_0) \ne 0$.
\end{theorem}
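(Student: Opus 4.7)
The plan is to argue by contradiction. Suppose two local solutions $x(t)$ and $y(t)$ of the IVP coexist with $x(0)=y(0)=x_0$ but disagree on every right neighborhood of $0$. Because $f$ is nonconstant $\delta$-monotone, the result of Kovalev \cite{K} cited just above guarantees that $f$ is a $K$-quasiconformal homeomorphism of $\Omega$, and in particular continuous. Combined with $f(x_0)\ne 0$ this yields a ball $B=B(x_0,r)\subset\Omega$ on which $\abs{f}\ge c>0$; shrinking $T>0$, I may assume both trajectories lie in $B$ on $[0,T]$. The backward-uniqueness Proposition makes the coincidence set $\{t\ge 0:x(t)=y(t)\}$ a closed initial subinterval of $[0,T]$, so after resetting the initial time I may assume $x(t)\ne y(t)$ for every $t\in(0,T]$.

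Set $A(t)=x(t)-y(t)$ and $B(t)=\dot A(t)=f(x(t))-f(y(t))$. Applying the $\delta$-monotonicity inequality pointwise to the pair $(x(t),y(t))$ gives the differential inequality
\[
\langle A(t),\dot A(t)\rangle \;\ge\;\delta\,\abs{A(t)}\,\abs{\dot A(t)},\qquad t\in(0,T],
\]
which is equivalent to $d\abs{A}/dt\ge\delta\abs{\dot A}$ on $(0,T]$. Integrating from $0$ yields $\int_0^t\abs{\dot A(s)}\,ds\le\abs{A(t)}/\delta$, so $A$ is ``almost radial'' in the sense that its arc length on $[0,t]$ is comparable with its distance from the origin. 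To force $A\equiv 0$ I would pair this with a matching Lipschitz-type upper bound $\abs{f(a)-f(b)}\le L\abs{a-b}$ for $a,b$ in a smaller sub-ball of $B$; together with $(\abs{A}^2)'\le 2\abs{A}\abs{B}\le 2L\abs{A}^2$ and $A(0)=0$, Gronwall's inequality then gives $A\equiv 0$, the desired contradiction.

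The principal obstacle is this local Lipschitz estimate. $\delta$-monotone fields are globally only H\"older continuous, and H\"older regularity is insufficient for ODE uniqueness (witness the critical-point example $\dot x=\sqrt{x}$ at $x_0=0$). The hypothesis $f(x_0)\ne 0$ should preclude the degeneration responsible for non-Lipschitz behaviour: quantitatively, the lower bound $\abs{f}\ge c>0$ on $B$, together with the $\delta$-monotone angular condition and the quasiconformal distortion inequality $\abs{Df}^n\le K\,J_f$, should yield an $L^\infty_{\loc}$-bound on $Df$ near $x_0$. Establishing this promotion from local H\"older to local Lipschitz at non-critical points of a $\delta$-monotone field is the genuine technical heart of the argument; note that such additional structure must fail for merely monotone fields, since forward uniqueness does (per Example \ref{Ex1}).
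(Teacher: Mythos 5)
Your reduction to forward uniqueness via the backward-uniqueness proposition, and the pointwise inequality $\tfrac{\dtext}{\dtext t}|x-y|\geqslant \delta\,|\dot x-\dot y|$, are both sound, but the argument then rests entirely on the ``local Lipschitz at non-critical points'' estimate, and that estimate is not merely the hard part --- it is false. The class of $\delta$-monotone maps with a fixed $\delta$ is a convex cone: if $f$ and $g$ are $\delta$-monotone, then $\langle (f+g)(a)-(f+g)(b),a-b\rangle\geqslant \delta|a-b|\bigl(|f(a)-f(b)|+|g(a)-g(b)|\bigr)\geqslant\delta|a-b|\,|(f+g)(a)-(f+g)(b)|$. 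Now take a dense bounded sequence $\{a_j\}$ and set $f=\sum_j f_j$ with $f_j(z)=c_j(z-a_j)|z-a_j|^{-1/2}$ and $\sum c_j<\infty$; each $f_j$ is $\delta$-monotone with the same $\delta$ (the map of Example \ref{Ex1}, translated and rescaled), so $f$ is $\delta$-monotone, and since every summand is monotone, $|f(a_j)-f(z)|\geqslant\Delta_{f_j}(a_j,z)=c_j|z-a_j|^{1/2}$. Thus $f$ fails to be Lipschitz at every $a_j$, and all but at most one of these are non-critical points (a nonconstant $\delta$-monotone map is injective, hence has at most one zero). So no argument passing through an $L^\infty_{\loc}$ bound on $Df$ near $x_0$, or through Gronwall with a Lipschitz constant, can succeed. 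Note also that your first inequality alone is vacuous for uniqueness: combining $\int_0^t|\dot A|\leqslant |A(t)|/\delta$ with the trivial bound $|A(t)|\leqslant\int_0^t|\dot A|$ yields only $\delta\leqslant 1$.

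The paper's substitute for the Lipschitz bound is a much weaker regularity property that $\delta$-monotone fields do enjoy: the image of every $C^1$-arc under $f$ is rectifiable, i.e.\ $f$ has bounded variation along $C^1$-curves (Theorem \ref{Th41}, proved by reparametrizing the arc by the strictly increasing scalar $t\mapsto\langle f(x(t)),\dot x(0)/|\dot x(0)|\rangle$, which makes $f\circ x$ Lipschitz with constant $2/\delta$). Theorem \ref{ThDelMon} is then a special case of Theorem \ref{ThMain4}: one partitions $[0,t_0]$ by the times $t_k$ of the Partition Lemma \ref{ApLe1}, uses the three-point (quasisymmetry) condition for the quasiconformal map $f$ to bound $\log\bigl(|x_k-y_k|/|x_{k+1}-y_{k+1}|\bigr)$ by $C\,|f(x_{k+1})-f(x_k)|$, and observes that the telescoping sum is controlled by the total variation of $f$ along the trajectory, contradicting $x(t_\infty)=y(t_\infty)$. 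If you want to complete your proof, this bounded-variation-plus-quasisymmetry mechanism is the ingredient you need in place of the Lipschitz estimate.
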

The condition $f(x_0) \ne 0$ turns out to be necessary, though it is redundant for Lipschitz vector fields, see Example \ref{Ex1}.

It is also not difficult to see that if $f$ is merely H\"older continuous,  $f\in C^\alpha (\Omega)$,  with $0< \alpha <1$, then the assumption $f(x_0) \ne 0$ does not guarantee uniqueness. However, the uniqueness of integral curves, even for only  H\"older regular vector fields,  is still possible under additional  geometric  conditions, like $\delta$-monotonicity   in Theorem \ref{ThDelMon}. We shall work with homeomorphisms $f \colon   \R^n  \overset{\textnormal{\tiny{onto}}}{\longrightarrow} \R^n $ normalized by $f(0)=0$, so the origin of $\R^n$ is the only critical point of the field. In  the complex plane there is a close relationship between monotone vector fields and the so-called reduced quasiconformal mappings. In Section \ref{SubSec23} we take a close look at the  reduced distortion inequality
\begin{equation}\label{BeIn}
\left|f_{\bar z} \right| \leqslant k \left| \re f_z \right| \, , \quad 0 \leqslant k <1 \quad \textnormal{ for } f \in W^{1,1}_{\loc}(\mathbb C)
\end{equation}
This concept and relevant results  can be found in   \cite{AIMbook} and the recent work by the authors \cite{IKO}. One unusual feature of the homeomorphic solutions to the reduced distortion inequality should be pointed out. The measurable function $ \re f_z$ does not change sign in $\mathbb C$, see \cite[Theorem 6.3.2]{AIMbook}. Precisely, we have
\begin{equation}\label{RePo}
\textnormal{either } \hskip2cm \re f_z \geqslant 0 \quad \textnormal{ a.e.  in } \mathbb C
\end{equation}
\begin{equation}\label{ReNe}
\textnormal{or } \hskip2.7cm \re f_z \leqslant 0 \quad \textnormal{ a.e.  in } \mathbb C
\end{equation}
What is more, though we do not exploit it here, is  that (\ref{RePo}) or  (\ref{ReNe}) actually hold with strict inequalities, which is rather deep analytic fact recently established by  Alessandrini  and  Nesi  \cite{AN} in connection with the question of $G$-compactness of the Beltrami equation \cite{IGKMS, AIMbook}. The property (\ref{RePo})-(\ref{ReNe}), does not hold for noninjective solutions of (\ref{BeIn}). It also fails for homeomorphic solutions in proper subdomains $\Omega \subset \mathbb C$. Since we confine ourselves to injective vector fields in the entire complex plane, we can certainly assume that $\re f_z \geqslant 0$. Thus,
\begin{equation}\label{Kav18}
\left| f_{\bar z} \right| \leqslant k \re f_z
\end{equation}
For if not,   replace $f$ by $-f$, which  affects  only the direction of the integral curves.
Homeomorphic solution to~\eqref{Kav18} will be referred to as reduced $K$-quasiconformal mappings, $K= \frac{1+k}{1-k}$.
In Section \ref{Sec3} we shall show that
\begin{proposition}\label{Pr16}
Every nonconstant solution to the reduced distortion inequality
\begin{equation}
\left|f_{\bar z} \right| \leqslant k  \re f_z  \, , \quad 0 \leqslant k <1 \quad \textnormal{ for } f \in W^{1,1}_{\loc}(\mathbb C) ,
\quad f(0)=0\nonumber
\end{equation}
is strictly monotone, unless $f(z)=i \omega z$, where $\omega$ is a (nonzero) real number, in which case $\langle f(a)-f(b), a-b \rangle \equiv 0$.
\end{proposition}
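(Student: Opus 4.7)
My plan is to first establish (nonstrict) monotonicity by mollification, and then to derive the dichotomy---strict monotonicity versus $f(z)=i\omega z$---by a first-variation analysis at a pair where monotonicity degenerates.

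For monotonicity, set $f_\epsilon = f\ast\rho_\epsilon$ with a standard mollifier; convolution preserves the reduced distortion inequality via $|f_{\bar z}\ast\rho_\epsilon|\le|f_{\bar z}|\ast\rho_\epsilon\le k(\re f_z)\ast\rho_\epsilon = k\re(f_\epsilon)_z$, so each $f_\epsilon$ is smooth and still satisfies it. The pointwise identity, valid for smooth maps and any $h\in\C$,
\[
\langle Df(z)h,h\rangle = |h|^2\re f_z(z) + \re\bigl(f_{\bar z}(z)\bar h^2\bigr) \ge (1-k)|h|^2\re f_z(z)\ge 0,
\]
shows that the symmetric part of $Df_\epsilon$ is positive semidefinite (with smallest eigenvalue at least $(1-k)\re f_z$). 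Integrating along $\gamma(t)=b+t(a-b)$ gives $\langle f_\epsilon(a)-f_\epsilon(b),a-b\rangle\ge 0$, and since $f$ is $K$-quasiregular with $K=(1+k)/(1-k)$ (hence H\"older continuous by Gehring's lemma), $f_\epsilon\to f$ locally uniformly and the inequality passes to $f$: $\Phi(a,b):=\langle f(a)-f(b),a-b\rangle\ge 0$.

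For the dichotomy, assume $\Phi(a_0,b_0)=0$ with $a_0\ne b_0$, and let $e=a_0-b_0$. Monotonicity applied to collinear pairs on the segment $[b_0,a_0]$ first forces $\langle f(\cdot),e\rangle$ to be constant on the segment, so $\Phi$ vanishes on the two-parameter family of all pairs drawn from the segment. The vanishing of $\int_0^1\langle Df_\epsilon(\gamma_0(t))e,e\rangle\,dt$, combined with the pointwise bound $\langle Df_\epsilon e,e\rangle\ge(1-k)|e|^2\re(f_\epsilon)_z$, forces $\re f_z=0$ and hence $f_{\bar z}=0$ along the segment, so that $f_z=i\omega(t)$ is purely imaginary there. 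A first-variation calculation at these constrained minima of $\Phi$ (using the identity $Df^T h=\overline{f_z}\,h+f_{\bar z}\,\bar h$) then gives $f(c)-f(d)=i\omega(t_c)(c-d)$ for $c,d$ on the segment; equating with the fundamental theorem of calculus $f(c)-f(d)=ie\int_{t_d}^{t_c}\omega(s)\,ds$ yields $\omega(t_c)(t_c-t_d)=\int_{t_d}^{t_c}\omega(s)\,ds$, and differentiation in $t_c$ forces $\omega$ to be a single real constant $\omega_*$ on the segment.

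Now set $g(z)=f(z)-i\omega_*z$. Then $g$ is $K$-quasiregular, since $|g_{\bar z}|=|f_{\bar z}|\le k\re f_z=k\re g_z\le k|g_z|$, and $Dg$ vanishes along $[b_0,a_0]$, so $g$ is constant on this segment. A nonconstant quasiregular mapping in the plane has discrete preimages (Stoilow factorization / Reshetnyak), which rules out being constant on a nondegenerate continuum; hence $g$ is globally constant, giving $f(z)=i\omega_*z+c$. The normalization $f(0)=0$ forces $c=0$, and nonconstancy of $f$ forces $\omega_*\ne 0$, yielding $f(z)=i\omega_*z$. The main technical difficulty is to make the first-variation argument rigorous for $f\in W^{1,1}_{\loc}$: since $f_z$ lives only in $L^p_{\loc}$ and the segment $[b_0,a_0]$ has two-dimensional measure zero, the identities above must be extracted via averaging arguments through the smooth approximants $f_\epsilon$ and careful limit passage.
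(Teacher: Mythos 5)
Your first half is fine: mollification preserves the reduced distortion cone, the pointwise identity $\langle Df_\epsilon(z)h,h\rangle=|h|^2\re (f_\epsilon)_z+\re\bigl((f_\epsilon)_{\bar z}\bar h^2\bigr)\ge(1-k)|h|^2\re (f_\epsilon)_z\ge 0$ is correct, and integrating along segments gives $\langle f(a)-f(b),a-b\rangle\ge 0$ in the limit (granted the continuity of $f$, which for $W^{1,1}_{\loc}$ solutions of the reduced inequality is part of the regularity theory the paper also relies on; it does not come from ``Gehring's lemma'' applied to a $W^{1,1}_{\loc}$ map). The gap is in the dichotomy, and it is precisely where the content of the proposition lives. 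From $\langle f(a_0)-f(b_0),a_0-b_0\rangle=0$ you correctly conclude that $\langle f(\cdot),e\rangle$ is constant on the segment, and the averaging does yield $\int_0^1\re(f_\epsilon)_z(\gamma_0(t))\,dt\to 0$. But this only controls averages of the nonnegative density $\re f_z$ over shrinking neighborhoods of a null set; the assertion ``$\re f_z=0$ and $f_{\bar z}=0$ along the segment'' has no pointwise meaning for an $L^1_{\loc}$ derivative, and in any case says nothing about $f$ off the segment. The first-variation step then fails as stated: for $f_\epsilon$ the nonnegative function $c\mapsto\langle f_\epsilon(c)-f_\epsilon(d),c-d\rangle$ is only $o(1)$, not zero, at points of the segment, so its gradient there need not be small without a uniform bound on $D^2f_\epsilon$, which you do not have; and for $f$ itself you cannot differentiate at points of a measure-zero segment. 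Hence the identity $f(c)-f(d)=i\omega(t_c)(c-d)$, which feeds your (otherwise correct) Stoilow endgame, is not established. You call this ``the main technical difficulty,'' but it is not a technicality --- it is the theorem.

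The paper closes exactly this gap by a global rather than local argument. For $\lambda\ge 0$ and $\omega\in\R$ the map $F(z)=f(z)+(\lambda+i\omega)z$ satisfies $|F_{\bar z}|=|f_{\bar z}|\le k\re f_z\le k\re F_z$, so it is a nonconstant solution of the same reduced inequality unless it is constant; by the injectivity theorem for nonconstant quasiregular maps with $\re F_z\ge 0$ a.e.\ (Corollary~1.5 of \cite{IKO}) such $F$ is a homeomorphism. Injectivity gives $\frac{f(x)-f(a)}{x-a}+\lambda+i\omega\ne 0$ for $x\ne a$, and choosing $\omega=-\im\frac{f(x)-f(a)}{x-a}$ and letting $\lambda$ run over $[0,\infty)$ forces $\re\frac{f(x)-f(a)}{x-a}>0$. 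If you want to keep your framework, the cleanest repair of your degenerate-pair analysis is to import that injectivity theorem at the single point where you need it: with $\omega_0=-\im\frac{f(a_0)-f(b_0)}{a_0-b_0}$, the map $f+i\omega_0z$ sends $a_0$ and $b_0$ to the same value (its real part of the difference quotient vanishes by degeneracy, its imaginary part by the choice of $\omega_0$), hence it is not injective, hence it is constant, hence $f(z)=-i\omega_0z$ after using $f(0)=0$. In other words, your route genuinely differs from the paper's only in the first half; the second half cannot be made local, and the injectivity of nonconstant reduced quasiregular maps appears to be unavoidable input.
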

We refer to this latter case as degenerate reduced quasiconformal map. Dynamics of the vector field $f(z)=i \omega z$ is rather simple; its integral curves are circles centered at the origin, $z(t)= z_0 e^{i \omega t}$. From now on let us  restrict ourselves to discussing nondegenerate reduced $K$-quasiconformal fields. Since $f$ is strictly monotone, it follows that $\re f(1)= \langle f(1)-f(0), 1-0 \rangle >0$. Thus it involves no loss of generality in assuming that $\re f(1)=1$, by rescaling   time parameter if necessary. This yields $|f(1)|\geqslant 1$. We then introduce the following class of the reduced $K$-quasiconformal fields.
\begin{definition}
Given $K \geqslant 1$ and $d \geqslant 1$, we consider the family
\[\mathcal F_K(d)= \big\{ f \colon \mathbb C \to \mathbb C  \colon f(0)=0 \textnormal{ and } 1 = \re f(1) \leqslant |f(1)| \leqslant d \big\}\]
where the mappings in consideration are solutions to the differential inequality
\begin{equation}
\left|f_{\bar z} \right| \leqslant \frac{K-1}{K+1}  \re f_z    \quad  \quad  f \in W^{1,2}_{\loc}(\mathbb C) \nonumber
\end{equation}
\end{definition}
Such solutions are automatically $K$-quasiconformal homeomorphisms. It is perhaps worth noting that $\mathcal F_K(d)$ is a convex family; that is, given $f,g\in \mathcal F_K(d)$ their convex combination
$(1- \lambda )f+ \lambda g$, $0 \leqslant \lambda \leqslant 1$, also belongs to  $\mathcal F_K(d)$.

 Let us summarize the above discussion by  the following chain of inclusions
\begin{eqnarray}\label{CoI} &\, & \,    \begin{Bmatrix} \delta \textnormal{-monotone}\\
 \textnormal{mappings}  \end{Bmatrix}  \subsetneq    \begin{Bmatrix}  \textnormal{reduced } K\textnormal{-quasiconformal} \\
\textnormal{mappings}  \end{Bmatrix}  \nonumber \\   &\, & \; \subsetneq   \begin{Bmatrix}  \textnormal{monotone } K\textnormal{-quasiconformal} \\
\textnormal{mappings}  \end{Bmatrix}  \subsetneq   \begin{Bmatrix}  K\textnormal{-quasiconformal} \\
\textnormal{mappings}  \end{Bmatrix}
\end{eqnarray}
Here  all the   inclusions are strict and $K= \frac{1+ \sqrt{1-\delta^2}}{1- \sqrt{1-\delta^2}}$.

We succeeded in extending Theorem~\ref{ThDelMon} to mappings in the second family of this chain.
\begin{theorem}\label{ThMain}$\textnormal{\textsc{(Uniqueness)}}$ Given a reduced quasiconformal field $f\in \mathcal F_K(d)$. Through every $x_0 \ne 0$ there passes exactly one integral curve $x=x(t)$,
\begin{equation}
\dot{x}(t)=f\big(x(t)\big)\, , \quad \quad x(0)=x_0
\end{equation}
defined in its maximal interval $(\alpha, \beta)$,   where  $-\infty \leqslant \alpha <0 < \beta \leqslant \infty$. We have
$x(t)\in \mathbb C_0=\mathbb C\setminus\{0\}$ for $t\in (\alpha, \beta)$, and
\begin{equation}\label{EqI111}
 \lim\limits_{t \searrow \alpha} x(t) =0 \quad \mbox{ and }  \quad \lim\limits_{t \nearrow \beta} x(t) =\infty
 \end{equation}
 Moreover,
 \begin{equation}\label{EqI112}
 |x(s)|<|x(t)|\, , \quad \quad \mbox{ for } \alpha <s<t<\beta
 \end{equation}
 \end{theorem}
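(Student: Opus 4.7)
My plan is to prove, in order, (i) strict growth of $|x(t)|$, (ii) the boundary behavior \eqref{EqI111}, (iii) backward uniqueness, and (iv) forward uniqueness, the last being the crux. Items (i) and (ii) follow from strict monotonicity (Proposition~\ref{Pr16}). Since $f(0)=0$, we have $\langle f(z),z\rangle>0$ for every $z\in\mathbb C_0$, so $\frac{d}{dt}|x(t)|^2=2\langle f(x),x\rangle>0$ on any portion of a trajectory in $\mathbb C_0$, which is \eqref{EqI112}. The monotone one-sided limits $r_\alpha:=\lim_{t\searrow\alpha}|x(t)|$ and $r_\beta:=\lim_{t\nearrow\beta}|x(t)|$ exist in $[0,\infty]$; if $r_\beta<\infty$ then $x(t)$ stays in the compact annulus $A=\{|x_0|\le|z|\le r_\beta\}$, on which $\langle f(z),z\rangle\ge c>0$ by continuity, so $|x(t)|^2\ge|x_0|^2+2ct$ forces $\beta<\infty$ and, via boundedness of $\dot x=f(x)$ on $A$, yields a limit $x(\beta^-)\in\mathbb C_0$, contradicting maximality by Peano's theorem. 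A symmetric argument rules out $r_\alpha>0$, which establishes \eqref{EqI111} and shows $x(t)\in\mathbb C_0$ throughout $(\alpha,\beta)$.

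Backward uniqueness is immediate from the Backward Uniqueness proposition: if $x(0)=y(0)=x_0$, then $|x(t)-y(t)|$ is non-decreasing and vanishes at $t=0$, so $x\equiv y$ for $t\le 0$. It remains to prove forward uniqueness from an arbitrary $x_0\in\mathbb C_0$. Writing $x(t)=r(t)e^{i\theta(t)}$ and setting $u+iv:=e^{-i\theta}f(re^{i\theta})$, the system \eqref{IVP} becomes
\[
\dot r=u(r,\theta),\qquad r\,\dot\theta=v(r,\theta),\qquad r\,u(r,\theta)=\langle f(x),x\rangle>0\ \text{for}\ x\ne 0.
\]
Hence $t\mapsto r(t)$ is a diffeomorphism onto $(0,\infty)$, and the trajectory is determined by the scalar Cauchy problem
\[
\frac{d\theta}{dr}=V(r,\theta):=\frac{v(r,\theta)}{r\,u(r,\theta)},\qquad \theta(|x_0|)=\arg x_0.
\]
I would then produce an Osgood-type modulus of continuity for $V$ in $\theta$ on compact subsets of $\mathbb C_0$---of the form $|V(r,\theta_1)-V(r,\theta_2)|\le C|\theta_1-\theta_2|\,\omega(|\theta_1-\theta_2|)$ with $\int_{0^+}d\tau/(\tau\,\omega(\tau))=+\infty$---and apply Osgood's classical uniqueness theorem to the angular ODE. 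Uniqueness of $\theta(r)$ then trivially yields uniqueness of $r(t)$ (1-dimensional separation of variables, $\dot r>0$) and hence of $x(t)$.

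The main obstacle is producing such a modulus for $V$. A generic $K$-quasiconformal map is only H\"older with exponent $1/K<1$, which fails Osgood's criterion. The decisive additional input must be the one-sided sign condition $\re f_z\ge 0$ built into the reduced distortion inequality $|f_{\bar z}|\le k\,\re f_z$; combined with the $W^{1,2}_{\loc}$-regularity of $f$ and the locally-uniform compactness of the normalized family $\mathcal F_K(d)$, this should upgrade the $\theta$-regularity of $f$ on compact subsets of $\mathbb C_0$ to a near-Lipschitz (logarithmic- or Dini-type) modulus sufficient for Osgood. This quantitative extraction of a supercritical modulus from the reduced-quasiconformal structure---crucially exploiting the one-sided character of the distortion that fails for generic $K$-quasiconformal maps---is where I expect to spend most of the effort.
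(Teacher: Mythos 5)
Your items (i)--(iii) are correct and essentially coincide with the paper's treatment: strict growth of $|x(t)|$ and the limits \eqref{EqI111} follow from strict monotonicity plus compactness of annuli (the paper does this quantitatively via \eqref{Eq40}, \eqref{Eq122}, \eqref{Eq124} and Corollary~\ref{Cor42}), and backward uniqueness is the elementary monotonicity observation. The gap is in step (iv), which is the entire content of the theorem. After reducing to the angular ODE $d\theta/dr=V(r,\theta)$ you say you \emph{would} produce an Osgood modulus for $V$ in $\theta$; no such modulus is produced, and the one you hope for is not available. Solutions of $|f_{\bar z}|\le k\,\re f_z$ are in general no better than H\"older continuous (exponent $1/K$), even away from the critical point: the introduction of the paper explicitly notes that H\"older continuity together with $f(x_0)\ne 0$ does not give uniqueness, and that even the bounded-variation-on-$C^1$-curves property fails for reduced quasiconformal maps~\cite{IKO2}. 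So the ``decisive additional input'' you are counting on --- upgrading the $\theta$-regularity of $f$ to a Dini/logarithmic modulus --- is precisely what the reduced structure does not supply, and the argument collapses at its load-bearing step.

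The paper's mechanism is qualitatively different and never passes through an improved modulus of continuity. One controls $\int\frac{d}{dt}\log|x(t)-y(t)|\,dt=\int \Delta_f(x,y)\,|x-y|^{-1}\,dt$ directly, where $\Delta_f$ is the modulus of monotonicity. The points are: (a) for reduced fields $\Delta_f$ obeys the same three-point quasisymmetry as the modulus of continuity, see \eqref{Eq114}; (b) a partition $t_0>t_1>\cdots\to 0$ adapted to the \emph{pair} of curves (Lemma~\ref{ApLe1}) makes each piece of the integral comparable to $\Delta_f(x_k,x_{k+1})$ at consecutive points of a \emph{single} trajectory (Lemma~\ref{Pr121}); (c) these terms telescope into $|f(x_0)|-|f(x_m)|$ plus $\sum_k\diam^2 f(\gamma_k)$, and the quadratic variation of a planar quasiconformal map along a $C^1$-arc is finite and controlled by $\diam f(\Gamma)$ (Theorem~\ref{Th81}). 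The left side $\log\bigl(|x_0-y_0|/|x_m-y_m|\bigr)$ then blows up while the right side stays bounded. In short, the summability that an Osgood condition would give pointwise is obtained in aggregate, from cancellation plus the planar quadratic-variation bound; any repair of your polar-coordinate scheme would have to import essentially this machinery.
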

In other words, as the point $x(t)$ travels along  the curve in the direction of the increasing parameter $t$, its distance to the critical point of $f$ strictly increases.  To accommodate explicit uniform estimates we    restrict     the integral curves to the  annulus
\[ \mathbb C_r^R = \left\{z \colon r \le |z| \le R  \right\} \, , \quad \quad 0 <r<R< \infty \]
Consider two integral curves
\begin{eqnarray*}
\dot{x}(t)= f \big(x(t) \big)\, , & \quad x(t_0) = x_0 \in \mathbb C_r^R \\
\dot{y}(s)= f \big(y(s) \big)\, , & \quad y(t_0) = y_0 \in \mathbb C_r^R,
\end{eqnarray*}
where the time parameters $t$ and $s$ are restricted to the intervals in which
\[r   \leqslant \left| x(t )\right| \le R \quad \textnormal{and} \quad  r    \leqslant \left| y(s)\right| \le R \]
respectively. We then have the following Lipschitz dependence of the solutions on both the time parameter and initial data.

\begin{theorem}\label{ThMain2}$\textnormal{\textsc{(Lipschitz Continuity)}}$
There exist constants $A_r^R = A_r^R(K,d)$ and $B_r^R = B_r^R(K,d)$ such that
\begin{equation}\label{EqI1122}
\left| x(t)-y(s) \right| \leqslant A_r^R \left| t-s  \right| + B_r^R \left| x_0 -y_0 \right|
\end{equation}
In particular,
\begin{equation}\label{EqI113}
\left| x(t)-y(t) \right| \leqslant   B_r^R \left| x(s)-y(s) \right|
\end{equation}
as long as $x(t), y(t), x(s)$ and $y(s)$ lie in the annulus $\mathbb C_r^R$.
\end{theorem}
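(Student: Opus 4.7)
The plan rests on three uniform bounds, each depending only on $K, d, r, R$, extracted from the fact that $\mathcal F_K(d)$ is a compact (normal) family of normalized $K$-quasiconformal homeomorphisms of~$\mathbb C$:
(i) $|f(z)|\le M$ on $\mathbb C_r^R$;
(ii) a Lipschitz bound $|f(a)-f(b)|\le L|a-b|$ on $\mathbb C_r^R$;
(iii) a strictly positive lower bound $\inn{f(z),z}\ge c|z|^2$ on $\mathbb C_r^R$.
Items (i) and (ii) come from quasiconformal distortion theorems: a normalized $K$-quasiconformal homeomorphism of $\mathbb C$ has bounded differential on each compact subset of $\mathbb C_0=\mathbb C\setminus\{0\}$, with bounds uniform over the normal family $\mathcal F_K(d)$. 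Item (iii) uses the strict monotonicity guaranteed by Proposition \ref{Pr16}: the continuous function $(f,z)\mapsto \inn{f(z),z}/|z|^2$ is strictly positive on the compact set $\mathcal F_K(d)\times \mathbb C_r^R$, hence attains a positive minimum $c=c(K,d,r,R)$.

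Bound (iii) shows that any integral curve spends at most $T = (R^2-r^2)/(2cr^2)$ units of time inside $\mathbb C_r^R$, via $\frac{\dtext}{\dtext t}|x(t)|^2 = 2\inn{f(x),x} \ge 2cr^2$. Bound (ii) drives a Gronwall estimate for forward continuous dependence on initial data:
\[
\frac{\dtext}{\dtext t}|x(t)-y(t)|^2 = 2\inn{f(x)-f(y),\,x-y} \le 2L|x(t)-y(t)|^2,
\]
which integrates to $|x(t)-y(t)| \le e^{L(t-t_0)}|x_0-y_0| \le e^{LT}|x_0-y_0|$ provided both curves remain in $\mathbb C_r^R$. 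Bound (i) yields $|y(t)-y(s)| \le M|t-s|$ whenever $y$ stays in $\mathbb C_r^R$ between times $s$ and $t$. The triangle inequality $|x(t)-y(s)| \le |x(t)-y(t)| + |y(t)-y(s)|$ then gives \eqref{EqI1122} with $A_r^R = M$ and $B_r^R = e^{LT}$. Estimate \eqref{EqI113} follows at once from \eqref{EqI1122} by the autonomous character of the system: reset the initial time from $t_0$ to $s$, so that $(x(s),y(s))$ replace $(x_0,y_0)$, and equate the two free time parameters.

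The crux is the uniform Lipschitz bound (ii). A $K$-quasiconformal mapping is only H\"older continuous with exponent $1/K$ in general, and that weaker input leads to the differential inequality $u' \le Cu^{1/K}$ for $u = |x-y|$, which integrates to $u(t)^{(K-1)/K} \le u(s)^{(K-1)/K} + C'(t-s)$ and does \emph{not} close to Lipschitz dependence on initial data---letting $u(s)\to 0$ does not force $u(t)\to 0$, exactly the non-uniqueness pathology familiar for H\"older fields. What saves us is that a normalized $K$-quasiconformal homeomorphism of the whole plane has uniformly bounded differential on every compact subset of $\mathbb C_0$ (Koebe-type distortion for the family $\mathcal F_K(d)$), so the Hölder barrier is bypassed by genuinely quasiconformal input. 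A minor secondary point is that $y(t)$ may momentarily lie outside $\mathbb C_r^R$ when applying the triangle inequality; this is handled by invoking the same uniform estimates on a slightly larger annulus, say $\mathbb C_{r/2}^{2R}$, which only enlarges the constants $A_r^R$, $B_r^R$ by amounts still controlled by $K$, $d$, $r$, $R$.
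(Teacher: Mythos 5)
Your argument collapses at item (ii). A $K$-quasiconformal homeomorphism of $\mathbb C$, normalized or not, is in general \emph{not} locally Lipschitz on compact subsets of $\mathbb C_0$, and its differential is not pointwise bounded there: quasiconformal maps lie in $W^{1,p}_{\loc}$ for some $p>2$ but admit no $L^\infty$ bound on $Df$, and the sharp local modulus of continuity is H\"older with exponent $1/K$ at \emph{every} point, not just at the origin. A radial stretch $z\mapsto (z-z_0)|z-z_0|^{1/K-1}$ recentred at a point $z_0$ of the annulus already defeats any uniform Lipschitz bound, and the paper itself records (via the references \cite{Tu}, \cite{IKO2} after Theorem \ref{ThMain4}) that even reduced quasiconformal maps can carry a $C^1$-arc onto a non-rectifiable curve --- incompatible with a local Lipschitz estimate. ``Koebe-type distortion'' controls conformal maps, not quasiconformal ones. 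Since (ii) fails, the Gronwall inequality $\frac{\dtext}{\dtext t}|x-y|^2\le 2L|x-y|^2$ has no constant $L$ to run with, and the forward dependence on initial data --- which is exactly the content of \eqref{EqI113} --- is unproved. Indeed, if such an $L$ existed, Theorem \ref{ThMain} would follow at once from Picard--Lindel\"of and most of the paper would be unnecessary; the entire point is to obtain uniqueness and Lipschitz dependence \emph{without} a Lipschitz field.

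Items (i) and (iii) are sound in substance (the paper obtains them quantitatively as \eqref{Eq118} and \eqref{Eq116}, namely $|f(z)|\le d\,M_K(|z|)$ and $\Delta_f(x,0)\ge m_K(|x|)$, with no appeal to compactness of the family), and your reduction of \eqref{EqI1122} to \eqref{EqI113} plus the speed bound via the triangle inequality matches the paper's. But the substitute for Gronwall has to come from elsewhere. The paper partitions time by the Partition Lemma \ref{ApLe1} so that on each block the distance between the two arcs equals the block's time-length, bounds $\log\frac{|x_k-y_k|}{|x_{k+1}-y_{k+1}|}$ on each block through the modulus of monotonicity and the three-point condition (Lemma \ref{Pr121}), and sums telescopically; the resulting sum is controlled by the quadratic variation of $f$ along the integral arc, which Theorem \ref{Th81} bounds by $\diam^2 f(\Gamma)$. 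That chain of ideas is what replaces the nonexistent Lipschitz constant, and none of it appears in your proposal.
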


There is a convenient and geometrically pleasing parametrization of the integral curves for a given field  $f\in \mathcal F_K(d)$.  Every integral curve $\Gamma$ intersects the unit circle $\mathbb S^1$ at exactly one point $e^{i \theta}$, $0 \leqslant \theta < 2 \pi$.  Denote such curve by $\Gamma_\theta$ and call $\theta$ a quasipolar  angle of the curve.  We have already mentioned that if  a point $z$ moves along   $\Gamma_\theta $   its distance to the origin strictly increases in time. Thus $\rho = |x|$ can be used as a new parameter in $\Gamma$, $0 \leqslant \rho \leqslant \infty$.  In this way every point $z\in \mathbb C_0$ is uniquely prescribed by its quasipolar coordinates  associated with the vector field $f\in \mathcal F_K(d)$ . This is a pair $(\rho, e^{i\theta}) \in \R_+ \times \mathbb S^1$ with $\rho=|z|$ as the polar distance of $z$ and $\theta $ as its  quasipolar angle; for example,  the identity map $f=id \colon \mathbb C \to \mathbb C$ gives the usual   polar coordinates $(\rho, e^{i \theta})$ of  $z= \rho e^{i \theta}$. Quasipolar coordinates give rise to a homeomorphism  $\Phi: \mathbb C \to \mathbb C$ defined by the rule $\Phi (\rho, e^{i \theta}) = \rho \cdot e^{i \theta}$. This homeomorphism  turns out to be locally bi-Lipschitz in $\mathbb C_0$. Precisely, we have
\begin{equation}\label{EqI15}
c_r^R \le \left|   \frac{\Phi (z_1) - \Phi (z_2)  }{z_1-z_2} \right| \le C_r^R \quad \quad \mbox{for } z_1, z_2 \in \mathbb C_r^R
\end{equation}
see Theorem \ref{Th141}. Moreover, $\Phi$ is the identity  on $\mathbb S^1$ and takes every circle $\mathbb S_\rho =\{z \colon |z|=\rho  \}$ onto itself. More importantly, $\Phi$ rectifies each trajectory $\Gamma_\theta$ by mapping it  onto the straight ray
\[\mathcal R_\theta = \big\{ z \colon \arg z = \theta  \big\}\, , \quad 0 \leqslant \theta < 2 \pi\]
see Figure \ref{fig1}.

\begin{figure}[!h]
\begin{center}
\includegraphics*[height=2.5in, angle=90]{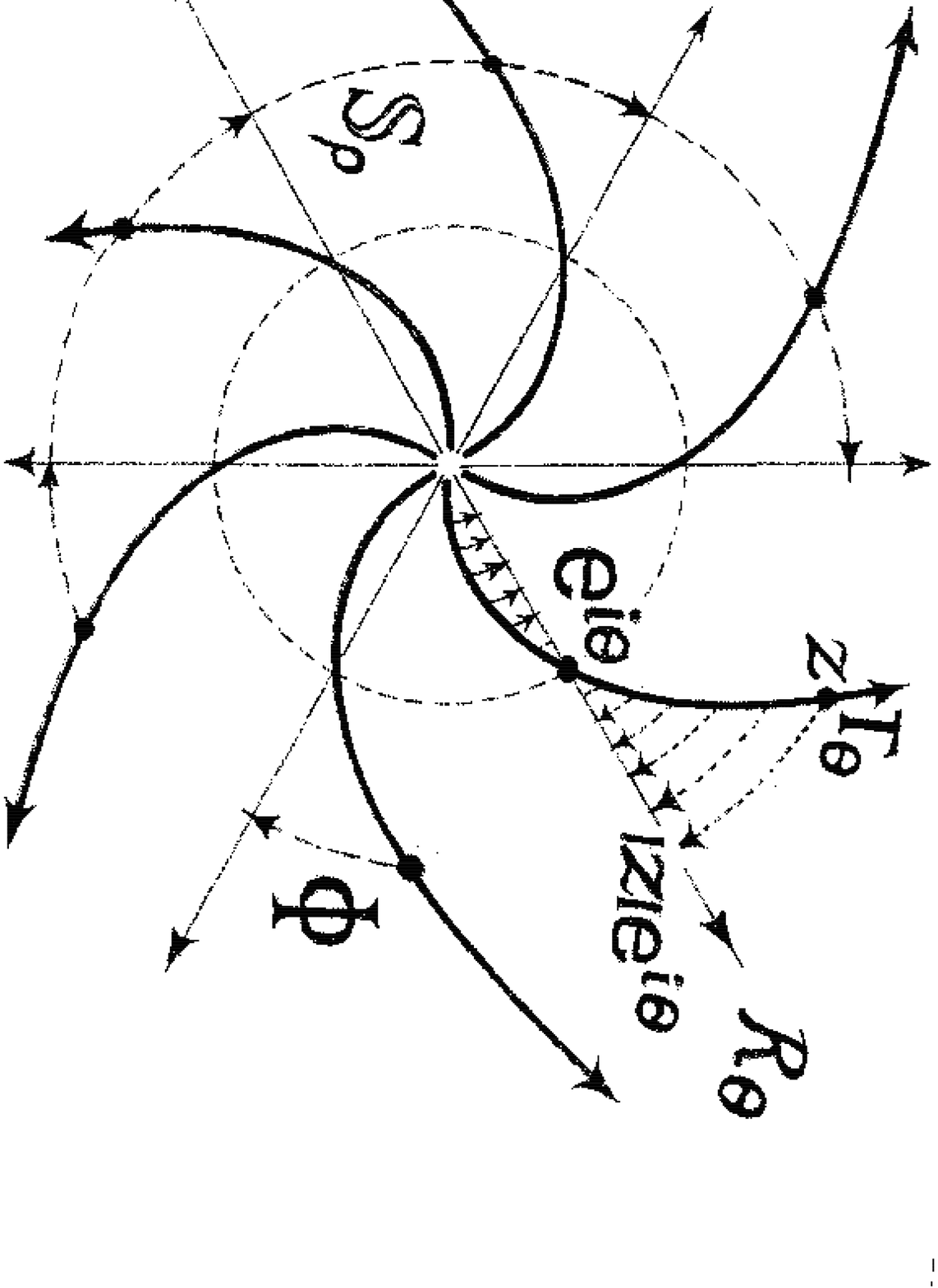}
\caption{Bi-Lipschitz rectification of trajectories.}\label{fig1}
\end{center}
\end{figure}

Every complex number $z \ne 0$ has infinitely many quasipolar angles which differ from each other by multiple of $2 \pi$.  Let us denote by $\Theta = \Theta (z)= \Theta_f(z)$ the multivalued function that assigns to $z\in \mathbb C_0$ all its quasipolar angles. A monodromy principle tells us that $\Theta$ has a continuous branch on every simply connected domain $\Omega \subset \mathbb C_0$. Two such branches differ by a constant. Therefore, it makes sense to speak of the gradient of $\Theta$,  defined by
\begin{equation}
\nabla \Theta   = \left( \frac{\partial \Theta }{\partial x}\, , \, \frac{\partial \Theta }{\partial y}\ \right) \, , \quad \mbox{ almost everywhere in } \mathbb C
\end{equation}
With the aid of the function $\Theta = \Theta (z)$ we shall factor the vector field  $i  f(z)$  into a product of a gradient field and a scalar function.
\begin{theorem}\label{ThMain3}
The orthogonal vector field
$ V(x,y)= if(x+iy)$, $f \in \mathcal F_k(d)$,
admits a factorization
\begin{equation}\label{EqFac}
V(x,y) = \lambda(x,y) \nabla \Theta
\end{equation}
The integrating factor is bounded from below and from above,
\begin{equation}\label{EqI118}
m(|z|)\leqslant \lambda (z) \leqslant M(|z|), \quad \lambda (z)= \frac{|f(z)|}{|\nabla \Theta (z)|}
\end{equation}
Here the lower and upper bounds $m,M  \colon (0,\infty) \to \mathbb R_+$ are continuous functions. These functions depend only on the parameters $K$ and $d$ of  the family $\mathcal F_K(d)$.
\end{theorem}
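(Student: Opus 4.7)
The geometric content of (\ref{EqFac}) is transparent: by construction $\Theta$ is constant along every trajectory $\Gamma_\theta$, so wherever $\nabla\Theta$ exists it must be perpendicular to $f$ and hence parallel to $V = if$. My plan is to make this precise by first establishing enough regularity for $\Theta$ via the rectification $\Phi$, and then deriving the quantitative bounds on $\lambda$ from the bi-Lipschitz constants of $\Phi$ together with standard distortion estimates for $f$.

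For the regularity, I would observe that $\Phi(z) = |z|\,e^{i\Theta(z)}$ and that, by Theorem~\ref{Th141}, $\Phi$ is locally bi-Lipschitz on $\mathbb C_0$. Since $|\Phi(z)| = |z|$ is smooth and bounded away from zero on any annulus $\mathbb C_r^R$, the angular factor $e^{i\Theta(z)} = \Phi(z)/|z|$ is locally Lipschitz there; a continuous branch of $\Theta$ on a simply connected subdomain is then locally Lipschitz as well, so $\Theta\in W^{1,\infty}_{\loc}$ and $\nabla\Theta$ exists almost everywhere. Differentiating the identity $\Theta(x(t))\equiv\theta$ along a trajectory (legitimate thanks to the Lipschitz time-dependence from Theorem~\ref{ThMain2}) yields $\langle\nabla\Theta,f\rangle = 0$ a.e., so $V = if$ is a.e. proportional to $\nabla\Theta$. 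The sign (that $\lambda>0$) is fixed by the model case $f=\id$, for which $\Theta=\arg z$, $V=iz$ and $\lambda=|z|^2>0$, and extended to arbitrary $f\in\mathcal F_K(d)$ by a continuity argument using the convexity of $\mathcal F_K(d)$ and the orientation-preservation of $\Phi$.

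For the two-sided bound on $\lambda = |f|/|\nabla\Theta|$, I would estimate numerator and denominator separately. Quasisymmetric distortion estimates for $K$-quasiconformal maps, combined with the normalization $f(0)=0$ and $1=\re f(1)\le|f(1)|\le d$, produce continuous $m_1(|z|)\le|f(z)|\le M_1(|z|)$ depending only on $K$ and $d$. For $\nabla\Theta$, writing $\Phi(z) = |z|(\cos\Theta(z),\sin\Theta(z))$ in Cartesian coordinates yields the identities
\[
|D\Phi|^2 = 1 + |z|^2|\nabla\Theta|^2,\qquad \det D\Phi = x\Theta_y-y\Theta_x = \partial_\phi\Theta,
\]
with $\phi$ the ordinary polar angle. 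The upper bi-Lipschitz bound on $\Phi$ then forces $|\nabla\Theta|\le C/|z|$, while the lower bound $\det D\Phi\ge (c_r^R)^2$ together with the Cauchy--Schwarz estimate $|\partial_\phi\Theta|\le |z||\nabla\Theta|$ delivers a matching lower bound $|\nabla\Theta|\ge c/|z|$. Combining these four inequalities produces the stated continuous bounds $m(|z|)\le\lambda(z)\le M(|z|)$.

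The principal technical obstacle is to convert the bi-Lipschitz constants $c_r^R,C_r^R$, which a priori depend on the annulus as a whole, into pointwise estimates whose dependence on $|z|$ is continuous and involves only $K$ and $d$. This is handled by localizing: for each radius $\rho>0$ one applies the bi-Lipschitz estimate on a thin shell such as $\mathbb C_{\rho/2}^{2\rho}$, so the resulting constants vary continuously with $\rho$ and depend only on the shape of the shell together with $K$ and $d$. A secondary subtlety is pinning down the sign of $\lambda$ uniformly over the whole family $\mathcal F_K(d)$; the convexity of the family provides a clean continuous deformation from $f=\id$ to a general field without ever passing through a degenerate one.
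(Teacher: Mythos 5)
Your route coincides with the paper's in all the main points: the orthogonality $\langle\nabla\Theta,f\rangle=0$ comes from $\Theta$ being constant on trajectories, the two-sided bound on $|\nabla\Theta|$ comes from the bi-Lipschitz estimate (\ref{EqI15}) for $\Phi$ (your identities $|D\Phi|^2=1+|z|^2|\nabla\Theta|^2$ and $\det D\Phi=\partial_\phi\Theta$ just make explicit what the paper asserts directly), the bound on $|f|$ comes from quasisymmetry with the normalization of $\mathcal F_K(d)$, and the localization to shells handles the continuous dependence on $|z|$. So the structure is sound.

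The one genuine gap is your determination of the sign of $\lambda$. A homotopy from $f=\id$ through the convex family $\mathcal F_K(d)$ is not a proof as it stands: the function $\Theta=\Theta_f$ and hence $\lambda$ depend on $f$ through the entire trajectory structure, and you would need some form of continuity of $f\mapsto\nabla\Theta_f$ (say in $L^1_{\loc}$), together with the fact that for each fixed $f$ the sign is a.e.\ constant, neither of which is established here or elsewhere in the paper, and neither of which is obvious. The paper avoids this entirely with a direct pointwise argument: since $\Theta$ increases in the counterclockwise direction on each circle $\mathbb S_\rho$ (the trajectories are ordered by their intersection with $\mathbb S^1$), one has $\partial_\phi\Theta\ge 0$ a.e., and combining this with $\re\bigl(f(z)/z\bigr)=\Delta_f(z,0)/|z|>0$ from (\ref{Eq116}) forces the plus sign in $if=\pm\frac{|f|}{|\Theta_z|}\Theta_{\bar z}$. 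Note that your own Jacobian identity already hands you this fix for free: $\Phi$ is a homeomorphism fixing $\mathbb S^1$ pointwise and preserving each circle, hence orientation-preserving, so $\partial_\phi\Theta=\det D\Phi>0$ a.e., which is exactly the monotonicity statement needed. Replace the deformation argument by this observation and the proof is complete.
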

It is not difficult to construct a vector field $f \in \mathcal F_K(d)$ for which no factorization of the form (\ref{EqFac}) together with (\ref{EqI118})  allows the integrating factor $\lambda$ to be continuous, see Example \ref{Ex111}  for details. Curiously, the curvature (in somewhat generalized sense) of the orthogonal trajectories is nonnegative, see Remark \ref{rem112} for an explanation.

To conclude the introduction, let us mention some of the ingredients of our proofs.
\begin{theorem}\label{Th41}
Let $f \colon \Omega \to \R^n$ be nonconstant  $\delta$-monotone. Then the image $f(\Gamma)$ of any $C^1$-curve $\Gamma \subset \Omega$ is locally rectifiable.
\end{theorem}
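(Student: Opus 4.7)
\medskip

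\noindent\textbf{Proof proposal for Theorem \ref{Th41}.}
The plan is to control the length of $f(\Gamma)$ by a classical telescoping/chord-direction argument, with the role of the Lipschitz bound replaced by the cone condition built into $\delta$-monotonicity: the inequality~\eqref{demon} forces the secant vector $f(a)-f(b)$ to lie in a spatial cone of aperture $\arccos\delta$ around the direction $a-b$.

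Since rectifiability is local and $\Gamma$ is $C^1$, it suffices to fix a point of $\Gamma$ and show that some small subarc has finite image length. Parametrize such a subarc by arc length, $\gamma\colon[0,L]\to \Omega$, so that $\tau(s):=\dot\gamma(s)$ is a continuous unit vector field on $[0,L]$. Fix $\varepsilon>0$ to be chosen. By uniform continuity of $\tau$, shrinking $L$ if necessary, we may assume $|\tau(s)-\tau(0)|<\varepsilon$ for all $s\in[0,L]$.

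The key geometric step is the \emph{chord-direction estimate}: for every $0\le a<b\le L$,
\[
\gamma(b)-\gamma(a)=(b-a)\tau(0)+\int_a^b\bigl[\tau(s)-\tau(0)\bigr]\,\dtext s,
\]
so $(1-\varepsilon)(b-a)\le |\gamma(b)-\gamma(a)|\le(1+\varepsilon)(b-a)$ and the unit chord direction $e_{a,b}:=(\gamma(b)-\gamma(a))/|\gamma(b)-\gamma(a)|$ satisfies $|e_{a,b}-\tau(0)|\le C\varepsilon$ for some absolute constant $C$. Now for any partition $0=s_0<s_1<\dots<s_N=L$, set $e_i:=e_{s_{i-1},s_i}$ and write $e_i=\tau(0)+w_i$ with $|w_i|\le C\varepsilon$. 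Applying~\eqref{demon} to the pair $\gamma(s_{i-1}),\gamma(s_i)$ and dividing by $|\gamma(s_i)-\gamma(s_{i-1})|$ gives
\[
\delta\,\bigl|f(\gamma(s_i))-f(\gamma(s_{i-1}))\bigr|\le \bigl\langle f(\gamma(s_i))-f(\gamma(s_{i-1})),\,e_i\bigr\rangle.
\]
Summing over $i$, the $\tau(0)$-component telescopes while the $w_i$-component is absorbed:
\[
\delta\sum_{i=1}^{N}\bigl|f(\gamma(s_i))-f(\gamma(s_{i-1}))\bigr|
\le \bigl\langle f(\gamma(L))-f(\gamma(0)),\tau(0)\bigr\rangle + C\varepsilon\sum_{i=1}^{N}\bigl|f(\gamma(s_i))-f(\gamma(s_{i-1}))\bigr|.
\]
Choosing $\varepsilon<\delta/(2C)$ at the outset, rearrangement yields the uniform bound
\[
\sum_{i=1}^{N}\bigl|f(\gamma(s_i))-f(\gamma(s_{i-1}))\bigr|\le \frac{2}{\delta}\bigl|f(\gamma(L))-f(\gamma(0))\bigr|,
\]
independent of the partition. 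Taking the supremum gives $\mathcal{H}^1(f(\gamma([0,L])))\le(2/\delta)\,|f(\gamma(L))-f(\gamma(0))|<\infty$, hence local rectifiability of $f(\Gamma)$.

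The only nontrivial step is the chord-direction estimate and the bookkeeping it requires; once that is in place, the cone inclusion inherent in $\delta$-monotonicity does all the work, much as Lipschitz continuity would in the smooth case. No appeal is made to the (true but deeper) fact that $f$ is quasiconformal, nor to the homeomorphism property—only the inequality~\eqref{demon} itself is used.
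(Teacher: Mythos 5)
Your proposal is correct and takes essentially the same route as the paper: both rest on the observation that on a sufficiently short $C^1$ subarc every chord direction lies within a small distance of the fixed tangent $\tau(0)$, so that $\delta$-monotonicity yields $\bigl\langle f(\gamma(b))-f(\gamma(a)),\tau(0)\bigr\rangle \geqslant \tfrac{\delta}{2}\bigl|f(\gamma(b))-f(\gamma(a))\bigr|$, and hence the component of $f$ along $\tau(0)$ controls the full increment. The paper packages this as an explicit Lipschitz (constant $2/\delta$) reparametrization of $f(\Gamma)$ via the strictly increasing function $t\mapsto\langle f(\gamma(t)),\tau(0)\rangle$, whereas you telescope the same inequality over an arbitrary partition and absorb the error term; the two arguments are equivalent.
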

Due to this property Theorem \ref{ThDelMon} is a corollary of the following more general result.
\begin{theorem}\label{ThMain4}
Let $f  \colon \Omega \to \R^n$ be a $K$-quasiconformal field of bounded variation on $C^1$-curves. Suppose we are given two local solutions of the Cauchy problem
\begin{eqnarray*}
\dot{x}(t)= f \big(x(t)\big)\, , & \quad x(0)=a\in \Omega \\
\dot{y}(t)= f \big(y(t)\big)\, , & \quad y(0)=a\in \Omega
\end{eqnarray*}
where $f(a) \ne 0$. Then there exist $\epsilon >0$ such that $x(t)=y(t)$ for $-\epsilon < t < \epsilon$.
\end{theorem}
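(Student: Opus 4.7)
The plan is to derive an Osgood-type integral inequality for the difference $d(t):=\abs{x(t)-y(t)}$ and invoke Osgood's uniqueness lemma, with the BV-on-$C^1$-curves hypothesis supplying the modulus that pure $K$-quasiconformality cannot.

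First I would set up the local picture. Because $f$ is $K$-quasiconformal it is continuous, and since $f(a)\ne 0$ one may fix a ball $B=B(a,r)\subset\Omega$ on which $\abs{f}\ge c>0$. For $\abs{t}$ small enough both $x(t)$ and $y(t)$ stay in $B$, and each is genuinely $C^1$ because $\dot x=f\circ x$ is continuous (similarly for $y$). Thus the arcs $x([0,t])$, $y([0,t])$, and the straight segment $\sigma_s$ from $x(s)$ to $y(s)$ are $C^1$-curves in $B$ to which the BV hypothesis applies, and from the integral form of the ODE,
\[
d(t)\le \int_0^t \abs{f(x(s))-f(y(s))}\, ds \le \int_0^t V_s\, ds,
\]
where $V_s:=V(f,\sigma_s)$ denotes the total variation of $f$ along $\sigma_s$.

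The technical heart of the proof, and its main obstacle, is to make the bound $V_s\le \omega(d(s))$ quantitative with an Osgood-integrable modulus $\omega$, uniformly in small $s$ as $\sigma_s$ shrinks toward $a$. The naive H\"older estimate $\abs{f(x(s))-f(y(s))}\le C\, d(s)^{1/K}$ coming from $K$-quasiconformality alone already fails the Osgood criterion, since $\int_0 dr/r^{1/K}<\infty$; this is precisely why the extra hypothesis is needed. My approach would combine the $W^{1,p}_{\loc}$ regularity of $f$ (with $p>n$) and a Fubini/tube-averaging argument over parallel translates of $\sigma_s$ with the pointwise BV information on $\sigma_s$ itself: the $L^p$-integrability of $\abs{Df}$ controls the variation on almost every nearby segment, while BV-on-$C^1$-curves rules out pathological oscillations on $\sigma_s$, yielding a modulus such as $\omega(r)=Cr\log(1/r)$ which is Osgood-admissible. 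An alternative, potentially cleaner route is to reparametrize both solutions by arclength (legitimate because $\abs{f}\ge c$) so that both obey $\sigma'=(f/\abs{f})(\sigma)$, and then exploit compactness within the normalized $K$-quasiconformal family together with the BV hypothesis to force coincidence of the two arclength-parametrized curves.

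Once the modulus is in place, the inequality $d(t)\le \int_0^t \omega(d(s))\, ds$ with $d(0)=0$ forces $d\equiv 0$ on a right neighborhood of $0$ by Osgood's lemma. The same argument applied to the vector field $-f$ (which satisfies the same hypotheses and has $(-f)(a)\ne 0$) handles $t<0$, so $x=y$ on some symmetric interval $(-\epsilon,\epsilon)$ as required.
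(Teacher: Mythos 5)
Your overall strategy---reduce to a pointwise Osgood modulus for $|f(x(s))-f(y(s))|$ and invoke Osgood's lemma---has a genuine gap at exactly the step you identify as the ``technical heart,'' and I do not believe that step can be repaired from the hypotheses. The bound $|f(x(s))-f(y(s))|\leqslant V(f,\sigma_s)$ is trivially true, but the hypothesis ``$f$ has bounded variation on $C^1$-curves'' only asserts that the total variation along each fixed compact $C^1$-arc is finite; it carries no rate, and in particular no estimate of the form $V(f,\sigma_s)\leqslant \omega(|x(s)-y(s)|)$ with $\omega(r)=Cr\log(1/r)$ or any other Osgood-admissible modulus. The segments $\sigma_s$ are not nested subarcs of a single curve (they move with $s$), so even the qualitative fact that variation on shrinking subarcs of a fixed curve tends to zero does not apply. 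The Fubini/tube-averaging argument using $Df\in L^p_{\loc}$ controls the variation on \emph{almost every} parallel segment, and there is no mechanism to transfer that to the particular segment $\sigma_s$; saying that the BV hypothesis ``rules out pathological oscillations'' is an assertion, not a proof. Indeed, a quasiconformal field of bounded variation on $C^1$-curves can still be merely H\"older at individual points (Example \ref{Ex1} in the paper is $\delta$-monotone, hence BV on $C^1$-arcs by Theorem \ref{Th41}, yet only $C^{1/2}$ at the origin), so no pointwise Osgood modulus exists in general and the entire Osgood route is blocked. The alternative route you sketch (arclength reparametrization plus ``compactness within the normalized family'') is too vague to assess and does not identify where the BV hypothesis would enter.

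For comparison, the paper's proof never attempts a pointwise modulus. It partitions the time interval $[t_\infty,t_0]$ by the Partition Lemma so that on each block $[t_{k+1},t_k]$ the separation of the two trajectories is comparable to $t_k-t_{k+1}$, hence comparable to the displacement $|x_k-x_{k+1}|$ \emph{along one trajectory}. The quasisymmetric three-point condition then converts the cross-curve difference $|f(x(t))-f(y(t))|$ into a bounded multiple of the along-curve increment $|f(x_k)-f(x_{k+1})|$. Feeding this into $\frac{\dtext}{\dtext t}\log|x-y|\leqslant |f(x)-f(y)|/|x-y|$ and telescoping gives $\log\bigl(|x_0-y_0|/|x_\ell-y_\ell|\bigr)\leqslant C\sum_k|f(x_{k+1})-f(x_k)|$, and only at this final summed stage is the BV hypothesis used, to bound the right-hand side; the left-hand side diverges as $x_\ell-y_\ell\to 0$, giving the contradiction. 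The moral is that the BV hypothesis is an integrated, one-curve quantity, and the proof must be organized so that it is consumed as a sum over a partition rather than as a pointwise modulus of continuity.
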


The conclusion of Theorem~\ref{Th41} fails for general quasiconformal maps~\cite{Tu}, even for reduced ones~\cite{IKO2}.
This is where the elementary but very useful  concept of the modulus of monotonicity
\begin{equation}
\Delta_f(a,b) = \left\langle f(a)-f(b), \frac{a-b}{|a-b|}  \right\rangle
\end{equation}
comes into play. We show that for reduced quasiconformal maps $\Delta_f$ has the same quasisymmetric behavior as the modulus of continuity for general quasiconformal maps. We exploit this property  by computing $\Delta_f$ at  suitably chosen  points on integral curves. Due to  a   cancellation property of  the modulus of monotonicity the sum of $\Delta_f$ over such partition points     is controlled by the quadratic variation of $f$.  On the other hand, for any planar quasiconformal map $f$ the quadratic variation over a $C^1$-arc is controlled by the diameter of its image. From this we deduce the uniqueness of solutions.

Our results raise the following
\begin{question}
Let  $f\colon \Omega \to \R^n$ be quasiconformal and $f(x_0) \ne 0$.  Does the system (\ref{IVP}) admit a unique local solution?
\end{question}

\section{Background}
Let us introduce the notation and briefly review basic concepts.
\subsection{Quasisymmetry}\label{SecQS} Let $\Omega$ be a domain in $\R^n$. A sense preserving homeomorphism $f \colon \Omega \to \R^n$ is said to be {\it $K$-quasiconformal}, $1 \leqslant K < \infty$, if
\begin{equation}\label{distine1}
\limsup\limits_{\epsilon \to 0} \frac{\underset{{|x-a|=\epsilon}}{\max} \left|f(x) - f(a)\right|}{\underset{{|y-a|=\epsilon}}{\min} \left|f(y) - f(a)\right|} \leqslant K \, , \quad \textnormal{ for every } a\in \Omega
\end{equation}
 It is well known that such mappings belong to the Sobolev class $W^{1,n}_{\loc}(\Omega , \mathbb R^n)$ and are   H\"older continuous with exponent $\alpha = \frac{1}{K}$. An analytic description of (\ref{distine1}) can be formulated (equivalently) via the so-called distortion inequality
\begin{equation}\label{Kav22}
\left| Df(x) \right|^n \leqslant \mathcal K \cdot J(x,f) \quad \quad \mbox{a.e.} \quad f \in W^{1,n}_{\loc}(\Omega, \R^n)
\end{equation}
for some $1 \leqslant \mathcal K < \infty$. Here $|Df(x)|$ stands for the norm of the differential matrix and $J(x,f)$ for the Jacobian determinant. In the complex plane  it reads as
\begin{equation}\label{Kav23}
\left|f_{\bar z} (z)\right| \leqslant k \left|f_{ z} (z)\right|\, , \quad k= \frac{K-1}{K+1} \quad \textnormal{a.e.}
\end{equation}
The $W^{1,n}$-solutions to the distortion inequality (\ref{Kav22}) or (\ref{Kav23}) (not necessarily injective) are referred to as $ K$-quasiregular mappings. Quasiregular mappings are also locally H\"older continuous. A purely geometric description of quasiconformality, which proves useful in our study here,  is  the   concept of quasisymmetry, also called three point condition.
\begin{theorem}$\textnormal{\textsc{(Three Points Condition)}}$  Let $f \colon \R^n \to \R^n$ be $K$-quasiconformal. Then
\begin{equation}\label{Kav24}
m_K \left(\frac{|x-a|}{|y-a|}\right) \leqslant \frac{|f(x)-f(a)|}{|f(y)-f(a)|} \leqslant M_K \left(\frac{|x-a|}{|y-a|}\right)
\end{equation}
for $a,x,y \in \R^n$, $a \ne y$, where
\[M_K(t)= C_K \max \left(t^K, t^\frac{1}{K}\right) \, , \quad \quad 0 \leqslant t < \infty\]
and
\[m_K(t)= \left[ M_K \left( t^{-1}\right) \right]^{-1}= C^{-1}_K \min \left(t^K, t^\frac{1}{K}\right)   \]
If $f$ keeps the origin fixed, $f(0)=0$, then
\begin{equation}
m_K \left(|x| \right) \, |f(1)| \leqslant \left| f(x) \right| \leqslant M_K \left( |x|\right) \, |f(1)|
\end{equation}
More generally,
\begin{equation}
m_K \left(\frac{|x|}{|y|}\right) \leqslant \frac{|f(x)|}{|f(y)|} \leqslant M_K \left(\frac{|x|}{|y|}\right)
\end{equation}
\end{theorem}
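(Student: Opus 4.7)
The plan is to reduce to the normalized case $a=0$ and $f(0)=0$ via pre-composition with the translation $z\mapsto z-a$ and post-composition with $w\mapsto w-f(a)$; both preserve $K$-quasiconformality. In this setting, the two-sided inequality \eqref{Kav24} collapses to the single upper bound $|f(x)|/|f(y)| \le M_K(|x|/|y|)$, because swapping $x$ and $y$ converts it into the lower bound once we invoke the identity $m_K(t) = [M_K(t^{-1})]^{-1}$. The two displays that follow \eqref{Kav24} are special cases: the first sets $y=1$, and the second is just the normalized inequality restated.

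For the upper bound I would use the quasi-invariance of the conformal modulus of ring domains: if $R\subset\R^n$ is a ring separating disjoint continua $E$ and $F$, then $\operatorname{mod}(f(R)) \le K\,\operatorname{mod}(R)$, with a symmetric inequality for $f^{-1}$. I would place the three points $0,x,y$ into a Teichm\"uller-type configuration (up to a rotation that aligns $y$ with the positive $e_1$-axis) by considering
$$R_T(s) = \R^n \setminus \bigl([-1,0]\cup[s,\infty)\cdot e_1\bigr),$$
whose modulus $\tau_n(s)$ has the known asymptotics $\tau_n(s) \asymp \log s$ as $s\to\infty$ and $\tau_n(s)\asymp 1/\log(1/s)$ as $s\to 0^+$. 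Combining these with quasi-invariance yields a power bound on $|f(x)|/|f(y)|$ in terms of $|x|/|y|$, with the two exponents $K$ and $1/K$ arising from the two regimes $|x|<|y|$ and $|x|>|y|$, and the constant $C_K$ collecting the constants from these asymptotics.

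The main obstacle is the geometric bookkeeping: arranging $0,x,y$ as the distinguished points of a Teichm\"uller ring requires a M\"obius change of coordinates and careful orientation control, and the two regimes $|x|<|y|$ and $|x|>|y|$ have to be treated by separate modulus comparisons. This argument is classical, going back to Gehring and V\"ais\"al\"a, so I would cite it as a standard tool rather than reproducing the details; the theorem is recorded here only so that the quasisymmetric control functions $m_K$ and $M_K$ are available in the explicit form needed in later sections.
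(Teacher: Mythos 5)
The paper offers no proof of this statement at all: it is quoted in Section 2.1 as classical background (the quasisymmetry, or ``three point condition,'' for globally defined quasiconformal maps, going back to Gehring and V\"ais\"al\"a), so there is no in-paper argument to compare against. Your sketch is the standard proof of that classical fact, and its skeleton is sound: translations in source and target reduce to $a=0$, $f(0)=0$; the identity $m_K(t)=[M_K(t^{-1})]^{-1}$ correctly converts the upper bound with $x$ and $y$ interchanged into the lower bound; and the two displayed corollaries are indeed the cases $y=1$ and the general normalized inequality. The modulus argument is the right tool, though as stated it is only a sketch: you need global two-sided bounds on $\tau_n$ (not just the asymptotics at $0$ and $\infty$), the extremality of the Teichm\"uller ring among rings separating $\{0,x\}$ from $\{y,\infty\}$, and the monotonicity of $\tau_n$ to invert the chain of inequalities. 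One point worth flagging if you were to write this out: the precise exponents $K$ and $1/K$ in $M_K$ depend on which dilatation the constant $K$ in the hypothesis refers to. The paper's definition \eqref{distine1} is via the linear dilatation, and for $n>2$ the quasi-invariance constant for the ring modulus is not literally $K$ in that normalization, so the exponents produced by the modulus comparison would have to be adjusted (in the plane, with $K$ the maximal dilatation, the stated form is correct). Since the paper itself asserts the theorem in this form without proof, citing the standard references as you propose is entirely appropriate.
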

\subsection{Modulus of monotonicity} Let $f \colon \Omega \to \R^n$ be continuous and monotone. The modulus of monotonicity $\Delta_f \colon \Omega \times \Omega \to [0, \infty)$ is defined by the rule
\begin{equation}
0 \leqslant \Delta_f(a,b)= \begin{cases}  \left\langle f(a)-f(b), \frac{a-b}{|a-b|} \right\rangle & \quad \textnormal{if } a \ne b\\
0 & \quad \textnormal{if } a = b  \end{cases} \; \;  \leqslant \;  \left|f(a)-f(b) \right| \nonumber
\end{equation}
We shall also work with    $\delta_f \colon \Omega \times \Omega \to [0, 1]$  given by
\begin{equation}
\delta_f(a,b) =  \left\langle \frac{f(a)-f(b)}{|f(a)-f(b)|}   , \frac{a-b}{|a-b|} \right\rangle \quad \quad  \mbox{ for } a \ne b
\end{equation}
Thus  $f$ is  $\delta$-monotone if and only if
\begin{equation}
\delta_f (a,b) \geqslant \delta \, , \quad \quad 0 \le \delta <1
\end{equation}
or, equivalently,
\begin{equation}
\Delta_f(a,b) \geqslant \delta \left| f(a)-f(b) \right|\, , \quad \textnormal{ for all } a,b \in \Omega
\end{equation}
\subsection{Reduced $K$-quasiconformal fields}\label{SubSec23}
We will be dealing with     the reduced distortion inequality
\begin{equation}\label{Eq101}
\left| f_{\bar z} \right| \leqslant k \re f_z \, , \quad 0 \leqslant k = \frac{K-1}{K+1}<1\, , \quad f(0)=0
\end{equation}
for $f: \mathbb C \to \mathbb C$ in the  Sobolev space $W^{1,2}_{\loc} (\mathbb C)$.  Such solutions form  a convex cone in $W^{1,2}_{\loc} (\mathbb C)$.  Precisely, if $f_1$ and $f_2$ solve (\ref{Eq101})  and $\lambda_1, \lambda_2 \geqslant 0$, then so does the mapping $\lambda_1 f_1 + \lambda_2 f_2$. As an example, consider the linear map $f_\circ (z)= az + b \bar z$ in which $|b| \leqslant k \re a$. Adding $f_\circ$ to a solution   of (\ref{Eq101}) gives another solution $F(z)=f(z)+az + b\bar z$. We recall  rather unexpected topological fact  that every  nonconstant quasiregular mapping   $f \colon \mathbb C \to \mathbb C$,  with $\re f_z \geqslant 0$ almost everywhere, is a homeomorphisms of $\mathbb C$ onto $\mathbb C$ \cite{IKO}. Actually such mapping satisfies   strict inequality $\re f_z >0$, almost everywhere, except for the case of the  degenerate monotone mapping
\begin{equation}\label{Eq102}
f(z) \equiv i \omega\,   z \, , \quad \textnormal{ with some } \omega \in \mathbb R\setminus \{0\}
\end{equation}
 The integral curves    $\dot{z}= i \omega\,  z$   are circles centered at the origin, $z(t)= z_0 e^{i \omega\,  t}$. As this case is completely clear, we  shall focus on  nondegenerate  reduced $K$-quasiconformal fields; that is   mappings $f \colon \mathbb C \to \mathbb C$ of Sobolev class  $W^{1,2}_{\loc} (\mathbb C)$ such that
\begin{equation}\label{Eq103}
\left|  f_{\bar z} \right| \leqslant k \re f_z \, , \quad f(0)=0
\, , \quad f(z) \not\equiv i \omega z \, , \quad 0 \leqslant k <1
 \end{equation}
The simple case is the complex linear vector field  $f(z)= (\lambda +i\omega)z$, $\lambda >0$. Its trajectories  are spirals $z(t)=z_0 e^{\lambda t} e^{i \omega t}$, $- \infty < t < \infty$, except for $\omega =0$. In this latter case
\begin{equation}\label{Eq104}
f(z)= \lambda z \, , \quad \quad \lambda >0
\end{equation}
for which the trajectories are straight rays $z(t)= z_0\,  e^{\lambda t}$, $-\infty < t < \infty$.   We shall see latter  that the trajectories of  every   (nondegenerate) reduced $K$-quasiconformal field  are  images of straight rays via a homeomorphism  $\Psi \colon \mathbb C   \overset{\textnormal{\tiny{onto}}}{\longrightarrow} \mathbb C$, $\Psi(0)=0$. This homeomorphism turns out to be locally bi-Lipschitz on $\mathbb C_0$,
see section~\ref{BLsection}.

\section{Estimates of  reduced $K$-quasiconformal fields}\label{Sec3}
Let $\lambda + i \omega$ be a complex number in the right half plane, $\lambda \geqslant 0$. Given any (nondegenerate) reduced $K$-quasiconformal map $f \colon \mathbb C \to \mathbb C$, we consider a  map $F(z)=f(z) + (\lambda + i \omega)z$. This   is a nonconstant solution to the same distortion inequality as $f$. Indeed,
\[\left| F_{\bar z}   \right| = \left| f_{\bar z}   \right|  \leqslant k \re f_z \leqslant k \re F_z  \]
Thus $F$ is $K$-quasiregular.  By virtue of Corollary~1.5 \cite{IKO} $F$ is a  homeomorphism.  In particular, the three point condition applies to $F$ to yield the inequalities
\begin{equation}\label{Eq111}
m_K \left( \left| \frac{x-a}{y-a}\right| \right) \leqslant \frac{|x-a|  \left| \frac{f(x)-f(a)}{x-a} + \lambda + i \omega \right|}{ |y-a|  \left| \frac{f(y)-f(a)}{y-a} + \lambda + i \omega \right| } \leqslant M_K \left( \left| \frac{x-a}{y-a}\right| \right)
\end{equation}
for every $x$ and $y \ne a$. Therefore, $\frac{f(x)-f(a)}{x-a} + \lambda + i \omega \ne 0$, whenever $x \ne a$. Putting $\omega = - \im \frac{f(x)- f(a)}{x-a}$  we arrive at the inequality
\[\lambda + \re \frac{f(x)-f(a)}{x-a} \ne 0 \quad \quad \textnormal{for all } \lambda \geqslant 0 \quad \mbox{and} \quad x \ne a\]
This gives
\begin{equation}\label{Eq112}
\Delta_f (x,a) =   |x-a| \re \frac{f(x)-f(a)}{x-a}>0
\end{equation}
We just proved   that every (nondegenerate) reduced quasiconformal map is strictly monotone, as stated in Proposition \ref{Pr16}.

Setting $a=0$ and $x=1$, we obtain
\begin{equation}\label{Eq113}
 \re f(1)= \Delta_f (1,0) >0
\end{equation}
This inequality makes it legitimate to normalize the (nondegenerate) reduced $K$-quasiconformal fields  by the condition $\re f(1)=1$. We indeed have used such normalization    in the definition of the class $\mathcal F_K(d)$.  Let us record this normalization once again as
\begin{equation}\label{Eq33p}
\Delta_f(1,0) = \re f(1) =1
\end{equation}

We now return to (\ref{Eq111}), and set  $\lambda =0 $ and  $\omega = - \im \frac{f(x)- f(a)}{x-a}$. We then arrive at  the   same three point condition for  $\Delta_f$ as for a general $K$-quasiconformal mapping in (\ref{Kav24}),
\begin{equation}\label{Eq114}
\frac{\Delta_f (x,a)}{\Delta_f (y,a)} = \frac{ \left\langle f(x)-f(a), \frac{x-a}{|x-a|}\right\rangle}{\left\langle f(y)-f(a), \frac{y-a}{|y-a|}\right\rangle} \geqslant    m_K \left( \left| \frac{x-a}{y-a}\right| \right) =  M^{-1}_K \left( \left| \frac{y-a}{x-a}  \right|  \right)
\end{equation}
In particular, setting $a=0$, we obtain
\begin{equation}\label{Eq115}
m_K \left( \left| \frac{x}{y}\right| \right) \leqslant \frac{\Delta_f (x,0) }{\Delta_f (y,0)} \leqslant M_K \left( \left| \frac{x}{y}\right| \right)
\end{equation}
Then, letting $y=1$, this simplifies to:
\begin{equation}\label{Eq116}
m_K(|x|)  \leqslant  |x| \re \frac{f(x)}{x}=\Delta_f (x,0)  \leqslant   M_K \big(|x|\big)\, , \quad \mbox{ for } f \in \mathcal F_K(d)
\end{equation}
As for the estimate of $|f(x)|$, we may  use the three point condition  and the assumption that $1\leqslant |f(1)|\leqslant d$, to infer that
\begin{equation}\label{Eq117}
m_K \left(|x| \right) \leqslant \left|\frac{f(x)-f(0)}{f(1)-f(0)} \right| \leqslant M_K \left( |x| \right)
\end{equation}
Hence
\begin{equation}\label{Eq118}
m_K \left(|x| \right) \leqslant \left| f(x) \right| \leqslant d\,  M_K \left( |x| \right)
\end{equation}
This combined with (\ref{Eq116}) gives a lower bound  of $\delta_f (x,0)$,
\begin{equation}
\delta_f (x,0) = \frac{\Delta_f(x,0)}{|f(x)|} \geqslant \frac{1}{d} \frac{m_K (|x|)}{M_K (|x|)}
\end{equation}
\section{Estimates along integral curves}
Let $\Gamma \subset \mathbb C_0$  be any integral curve of $f\in \mathcal F_K(d)$   parametriced    by  a solution of the differential equation $\dot{x}(t)= f \big( x(t) \big)$, $\alpha \leqslant t \leqslant \beta$. It follows from the previous computation that
\begin{equation}\label{Eq40}
 \frac{\dtext |x|}{\dtext t} = |x| \frac{\dtext \ln |x|}{\dtext t} = |x| \re \frac{\dot{x}}{x} = \Delta_f(x,0) \ge m_K(|x|)
 \end{equation}
This shows that the function $t \to |x(t)|$ is strictly increasing in time, whenever $x(t)$ stays away from the  critical point of $f$. Moreover, we have
\[\frac{\dtext |x|}{\dtext t} = \delta_f(x,0) \left|  \frac{\dtext x}{\dtext t} \right|\]
 Let us now  assume that $\Gamma \subset \mathbb C_r^R$. This means that $r \leqslant |x(t)| \leqslant R$ for all $\alpha \leqslant t \leqslant \beta$, so $\delta_f(x,0) \ge \frac{m_K(r)}{d\, M_K(R)}$ for $x\in \Gamma$. Hence
\begin{eqnarray}
|x(\beta)-y(\alpha)| &\leqslant & \int_\alpha^\beta \left| \dot{x}(t) \right|\, \dtext t  \leqslant    \int_{\alpha}^{\beta} \frac{1}{\delta_f(x,0)} \frac{\dtext |x|}{\dtext t} \dtext t   \\
&=&  d \,  \frac{M_K (R)}{m_K(r)} \big(|x(\beta)| - |x(\alpha)|  \big)  \nonumber
\end{eqnarray}
We just proved  a reverse type triangle inequality along $\Gamma$.
\begin{lemma}
Let $x_1$ and $x_2$ be points in an integral curve of $f$ such that $r\le |x_1|, |x_2| \le R$. Then
\begin{equation}\label{Eq122}
|x_1-x_2| = C_r^R  \cdot \big||x_1| -|x_2| \big|   \, , \quad \quad x_1,x_2 \in \Gamma  \; \footnote{Hereafter, $C_r^R$ stands for a constant depending on the annulus $\mathbb C_r^R$. It  also depends  on the family $\mathcal F_K(d)$, though  we shall not indicate the dependence on  the parameters $K$ and $d$, as the need will not arise.  However, for  the sake of notational simplicity we shall allow $C_r^R$ to vary from line to line.}
\end{equation}
\end{lemma}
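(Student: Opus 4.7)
The plan is to parametrize the sub-arc of $\Gamma$ between $x_1$ and $x_2$ and compare the arc length of this sub-arc to the change in modulus, exploiting that the two differ only by the factor $\delta_f(x,0)^{-1}$, which is uniformly bounded on $\mathbb C_r^R$.

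Without loss of generality assume $|x_1|\le |x_2|$. Since $t\mapsto |x(t)|$ is strictly increasing along every integral curve (equation (\ref{Eq40}) from the previous section), I would reparametrize so that $x(\alpha)=x_1$ and $x(\beta)=x_2$. The key identity is the one already displayed in the text: because $\dot x=f(x)$,
\[
\frac{\dtext|x|}{\dtext t}=|x|\re\frac{\dot x}{x}=\Delta_f(x,0),\qquad |\dot x|=|f(x)|,
\]
so that $\tfrac{\dtext|x|}{\dtext t}=\delta_f(x,0)\,|\dot x(t)|$.

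The second step is to use the lower bound on $\delta_f(x,0)$ derived in Section \ref{Sec3}, namely $\delta_f(x,0)\ge \tfrac{1}{d}\,\tfrac{m_K(|x|)}{M_K(|x|)}$. Since on $\Gamma$ we have $r\le|x(t)|\le R$, and since $m_K$ is non-decreasing and $M_K$ non-decreasing, this yields
\[
\delta_f(x(t),0)\;\ge\;\frac{m_K(r)}{d\,M_K(R)}\;=:\;\frac{1}{C_r^R}>0
\]
on the whole sub-arc. Estimating the Euclidean distance by the arc length and converting $\dtext t$-integration to $\dtext|x|$-integration then gives
\[
|x_2-x_1|\;\le\;\int_\alpha^\beta|\dot x(t)|\,\dtext t
\;\le\;C_r^R\int_\alpha^\beta \frac{\dtext|x|}{\dtext t}\,\dtext t
\;=\;C_r^R\bigl(|x_2|-|x_1|\bigr),
\]
which is the claimed reverse triangle inequality.

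There is no real obstacle here: every analytic ingredient (strict radial monotonicity of $|x(t)|$, the identities for $\tfrac{\dtext|x|}{\dtext t}$ and $|\dot x|$, and the quasisymmetric lower bound for $\delta_f(x,0)$) has already been established. The only thing to be careful about is that the constant $C_r^R$ depends on $r$ and $R$ through the monotonicity of $m_K$ and $M_K$, and that one really needs to integrate along the parametrization of $\Gamma$ (not between arbitrary points in the annulus), so that $\int\tfrac{\dtext|x|}{\dtext t}\,\dtext t$ telescopes to $|x_2|-|x_1|$.
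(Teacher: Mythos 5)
Your proposal is correct and follows essentially the same route as the paper: bound $|x_1-x_2|$ by the arc length $\int_\alpha^\beta|\dot x|\,\dtext t$, write $|\dot x|=\delta_f(x,0)^{-1}\,\tfrac{\dtext|x|}{\dtext t}$, and use the uniform lower bound $\delta_f(x,0)\ge m_K(r)/(d\,M_K(R))$ on the annulus to telescope the integral to $\bigl||x_1|-|x_2|\bigr|$. The only cosmetic difference is that the paper carries out the computation directly in the paragraph preceding the lemma rather than as a separate proof, and the displayed ``$=$'' in the statement is of course meant as ``$\leqslant$'', which is exactly what you prove.
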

Another point of significance is that the time difference between points in $\Gamma \subset \mathbb C_r^R$ is finite.  Indeed, the time between $x(\beta)$ and $x(\alpha)$ can be estimated as follows.
\begin{eqnarray*}
\left| x(\beta)  \right| - \left| x(\alpha)  \right| = \int_\alpha^\beta \frac{\dtext |x|}{\dtext t} \dtext t \ge \int_\alpha^\beta m_K(|x|)\, \dtext t \ge m_K(r) (\beta-\alpha)
\end{eqnarray*}
Hence, whenever $ r \leqslant |x(\alpha)| \leqslant |x(\beta)| \leqslant R$ we have
\begin{equation}\label{Eq124}
  \beta-\alpha \leqslant \frac{\left| x(\beta)  \right| - \left| x(\alpha)  \right|   }{m_K(r)} \le\frac{R-r}{m_k(r)}
\end{equation}
On the basis of these inequalities we  may now prove that the endpoints of   the maximal extension of the local  integral curves are the critical points of $f$.
\begin{corollary}\label{Cor42}
Let   $x=x(t)$ be  a solution to the system $\dot{x}=f(x)$, $f \in \mathcal F_K(d)$  for $t\in (\alpha, \beta)$. Here  $-\infty \leqslant \alpha < \beta \leqslant + \infty $ are the  endpoints of the maximal interval of existence. Then
\begin{equation}\label{Eq125}
\lim\limits_{t \searrow \alpha} x(t)=0 \quad \textnormal{ and } \quad \lim\limits_{t \nearrow \beta} x(t)=\infty
\end{equation}
\end{corollary}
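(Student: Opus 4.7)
The plan is to combine the monotonicity of the radius (\ref{Eq40}), the growth bound (\ref{Eq118}), and the time estimate (\ref{Eq124}) to argue that if either terminal radius were finite and positive, we could continuously extend $x(t)$ to the endpoint and then apply Peano's existence theorem to prolong the solution, contradicting maximality of $(\alpha,\beta)$.

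Since $x(t)\in\mathbb C_0$ throughout its domain, (\ref{Eq40}) makes $t\mapsto|x(t)|$ strictly increasing on $(\alpha,\beta)$, so the monotone limits
\[\rho_\alpha=\lim_{t\searrow\alpha}|x(t)|\in[0,\infty),\qquad \rho_\beta=\lim_{t\nearrow\beta}|x(t)|\in(0,\infty]\]
exist. Once we show $\rho_\alpha=0$ and $\rho_\beta=\infty$, the conclusion (\ref{Eq125}) follows, since $|x(t)|\to 0$ already forces $x(t)\to 0$, and $|x(t)|\to\infty$ forces $x(t)\to\infty$ in $\widehat{\mathbb C}$.

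For the forward assertion, suppose for contradiction that $\rho_\beta<\infty$. Fix $t_0\in(\alpha,\beta)$ and set $r_0=|x(t_0)|>0$, so that $x(t)\in\mathbb C_{r_0}^{\rho_\beta}$ for every $t\in[t_0,\beta)$. By (\ref{Eq118}), $|\dot x(t)|=|f(x(t))|\leqslant d\,M_K(\rho_\beta)$, which makes $x$ uniformly Lipschitz on $[t_0,\beta)$; hence $x_\beta:=\lim_{t\nearrow\beta}x(t)$ exists in $\mathbb C$ with $|x_\beta|=\rho_\beta>0$. Moreover, (\ref{Eq124}) gives $\beta-t_0\leqslant(\rho_\beta-r_0)/m_K(r_0)<\infty$, so $\beta$ itself is finite. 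Setting $x(\beta)=x_\beta$ extends $x$ to a continuous solution on $[t_0,\beta]$, and since $f(x_\beta)\neq 0$, Peano's theorem furnishes a local solution $\tilde x$ on $[\beta,\beta+\epsilon)$ with $\tilde x(\beta)=x_\beta$. Because both one-sided derivatives at $\beta$ equal $f(x_\beta)$, the concatenation is a genuine $C^1$ solution extending $x$ strictly past $\beta$, contradicting the maximality of $(\alpha,\beta)$.

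The backward assertion is strictly parallel. If $\rho_\alpha>0$, then applying (\ref{Eq124}) to arbitrary subintervals $[t_1,t_0]\subset(\alpha,t_0]$ (with $r=\rho_\alpha$, $R=r_0$) yields $t_0-t_1\leqslant(r_0-\rho_\alpha)/m_K(\rho_\alpha)$, forcing $\alpha>-\infty$; the same Lipschitz bound from (\ref{Eq118}) produces a limit $x_\alpha:=\lim_{t\searrow\alpha}x(t)$ with $|x_\alpha|=\rho_\alpha>0$, and Peano together with the $C^1$-concatenation extends $x$ past $\alpha$, again contradicting maximality. I foresee no serious obstacle: the only subtlety is the smooth matching at the endpoint, which is automatic from continuity of $f$, and all remaining ingredients are already assembled in Section~\ref{Sec3}.
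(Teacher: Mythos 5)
Your argument is correct and is essentially the paper's own proof: assume a terminal radius is finite and positive, use (\ref{Eq124}) to see the endpoint is reached in finite time, produce a nonzero limit point there, and invoke Peano to extend past it, contradicting maximality. The only cosmetic difference is that you obtain the existence of the limit point from the uniform bound $|f|\leqslant d\,M_K(R)$ in (\ref{Eq118}) (Lipschitz in $t$ on a finite interval), whereas the paper cites the reverse triangle inequality (\ref{Eq122}); both are valid.
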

\begin{proof}
As the function $t \to |x(t)|$ is increasing, it follows that
\[0 \leqslant r=  \lim\limits_{t \searrow \alpha} |x(t)| < \lim\limits_{t \nearrow \beta} |x(t)| =R \leqslant \infty \]
Suppose to the contrary  that $r>0$. Then the reverse triangle inequality (\ref{Eq122}) shows that we also have a limit $\lim\limits_{t \searrow \alpha} x(t)=a \ne 0$.   By (\ref{Eq124}) this limit  is attained in finite time.  But then, by virtue of Peano's Existence Theorem, the solution $x=x(t)$ admits an extension beyond $\alpha$ (for  some $t<\alpha$) contradicting maximality of the interval  $(\alpha, \beta)$. The same contradiction follows if one assumes that $R<\infty$
\end{proof}
\begin{lemma}\label{Pr121}
Let  $f \colon \mathbb C \to \mathbb C$  be a (nondegenerate) reduced $K$-quasiconformal field. Consider the integral arcs of the same time-length
\begin{eqnarray*}
\mathfrak{X}&= & \big\{x(t) \colon \dot{x}=f(x) \, , \quad \alpha \leqslant t \leqslant \beta  \big\} \subset \mathbb C_r^R \\
\Upsilon &= & \big\{y(t) \colon \dot{y}=f(y) \, , \quad \alpha \leqslant t \leqslant \beta  \big\} \subset \mathbb C_r^R
\end{eqnarray*}
We assume that time-length   equals the  distance between these arcs,
\[\dist (\mathfrak{X}, \Upsilon) = \beta - \alpha\]
Denote by $x_\alpha = x(\alpha)$,  $x_\beta = x(\beta)$ the endpoints of $\mathfrak X$ and $y_\alpha =y(\alpha)$, $y_\beta =y(\beta)$ the endpoints of $\Upsilon$. Then, for all $x\in \mathfrak X$ and $y\in \Upsilon$ we have
\begin{equation}\label{1210}
\Delta_f (x,y) \leqslant C_r^R   \Delta_f (x_\beta, x_\alpha)
\end{equation}
Moreover,
\begin{equation}\label{1211}
\Delta_f(x_\beta, x_\alpha) \leqslant \left| f(x_\beta)  \right| - \left| f(x_\alpha)  \right| + \frac{\diam{\, ^2} f(\mathfrak X)}{2 \, m_K(r)}
\end{equation}
\begin{equation}\label{Kav49}
\log \frac{|x_\beta - y_\beta |}{|x_\alpha - y_\alpha|} \le C_r^R  \left[  \left| f(x_\beta)  \right| - \left| f(x_\alpha)  \right| + \diam{\, ^2} f(\mathfrak X)  \right]
\end{equation}
\end{lemma}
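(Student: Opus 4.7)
\medskip\noindent\textbf{Proof plan.} I would prove the three inequalities in sequence, each using those preceding it.

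For (\ref{1210}), my plan is to apply the three-point comparison (\ref{Eq114}) three times along the chain
\[
\Delta_f(x,y)\;\rightsquigarrow\;\Delta_f(x_\alpha,y)\;\rightsquigarrow\;\Delta_f(x_\alpha,y_\alpha)\;\rightsquigarrow\;\Delta_f(x_\beta,x_\alpha),
\]
rotating the common point and using the symmetry $\Delta_f(a,b)=\Delta_f(b,a)$ between steps. This produces $M_K$-factors of the three ratios
\[
\frac{\abs{x-y}}{\abs{x_\alpha-y}},\qquad \frac{\abs{y-x_\alpha}}{\abs{y_\alpha-x_\alpha}},\qquad \frac{\abs{y_\alpha-x_\alpha}}{\abs{x_\beta-x_\alpha}},
\]
each of which must be shown bounded by a constant depending only on $r,R,K,d$. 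Three ingredients do this: the arc-length upper bound $\abs{x-x_\alpha},\abs{y-y_\alpha}\le(\beta-\alpha)\,d\,M_K(R)$ from $\abs{f}\le d\,M_K(R)$ via (\ref{Eq118}); the chord lower bound $\abs{x_\beta-x_\alpha}\ge(\beta-\alpha)\,m_K(r)$ from integrating (\ref{Eq40}); and the hypothesis $\dist(\mathfrak X,\Upsilon)=\beta-\alpha$, giving matching lower bounds $\ge\beta-\alpha$ for $\abs{x-y}$, $\abs{x_\alpha-y}$, and $\abs{y_\alpha-x_\alpha}$. Triangle inequalities then supply upper bounds of size $O(\beta-\alpha)$, so every ratio comes out $O(1)$.

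For (\ref{1211}), set $u=f(x_\beta)$, $v=f(x_\alpha)$, $\zeta=x_\beta-x_\alpha$, and $w=\zeta/\abs{\zeta}$. Splitting
\[
\Delta_f(x_\beta,x_\alpha)=\langle u-v,w\rangle\le \abs{u}-\langle v,w\rangle=(\abs{u}-\abs{v})+\tfrac{\abs{v}}{2}\,\abs{w-v/\abs{v}}^2
\]
(the last equality using $\abs{v}-\langle v,w\rangle=\tfrac12\abs{v}\abs{w-v/\abs{v}}^2$ via the trigonometric identity for $1-\cos\theta$), I would combine the integral representation $\zeta=(\beta-\alpha)v+\int_\alpha^\beta[f(x(t))-v]\,dt$, which yields $\abs{\zeta-(\beta-\alpha)v}\le(\beta-\alpha)\,\diam f(\mathfrak X)$, with the elementary identity
\[
\abs{a/\abs{a}-b/\abs{b}}^2\,\abs{a}\,\abs{b}=\abs{a-b}^2-(\abs{a}-\abs{b})^2
\]
applied to $a=\zeta$, $b=(\beta-\alpha)v$. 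Together with $\abs{\zeta}\ge(\beta-\alpha)\,m_K(r)$, one factor of $(\beta-\alpha)$ cancels and the angular defect comes out $\le\diam^2 f(\mathfrak X)/(2\,m_K(r))$, giving (\ref{1211}).

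For (\ref{Kav49}), I would run a Gronwall-type argument. Differentiating $\abs{x(t)-y(t)}^2$ along the trajectories gives $\frac{d}{dt}\log\abs{x(t)-y(t)}=\Delta_f(x(t),y(t))/\abs{x(t)-y(t)}$. Integrating from $\alpha$ to $\beta$, bounding the numerator by (\ref{1210}) and the denominator by $\abs{x(t)-y(t)}\ge\beta-\alpha$, yields $\log(\abs{x_\beta-y_\beta}/\abs{x_\alpha-y_\alpha})\le C_r^R\,\Delta_f(x_\beta,x_\alpha)$; substituting (\ref{1211}) and absorbing $1/(2 m_K(r))$ into $C_r^R$ closes the estimate. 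The main obstacle I anticipate is the first step: each of the three distance-ratios must be uniformly controlled, which succeeds only because the arc-length upper bounds and the chord lower bounds both scale linearly in $\beta-\alpha$, making the ratios homogeneous of degree zero in the time variable. Once (\ref{1210}) is in hand, the remaining two estimates are essentially direct calculations.
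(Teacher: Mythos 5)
Your proof is correct and follows essentially the same strategy as the paper's: a chain of three-point comparisons with the ratios controlled by the arc-speed upper bound, the chord lower bound, and the hypothesis $\dist(\mathfrak X,\Upsilon)=\beta-\alpha$ for (\ref{1210}) (the paper routes through $\Delta_f(x_\beta,y)$ in two steps rather than your three through $x_\alpha$ and $y_\alpha$, but the estimates are identical), the same $\langle A-B,Z\rangle\le |A|-|B|+\tfrac{1}{2\lambda}|B-\lambda Z|^2$ splitting for (\ref{1211}), and the same integration of $\frac{\dtext}{\dtext t}\log|x(t)-y(t)|$ for (\ref{Kav49}). Your replacement of the paper's appeal to a vector-valued Mean Value Theorem by the integral representation $x_\beta-x_\alpha=(\beta-\alpha)f(x_\alpha)+\int_\alpha^\beta[f(x(t))-f(x_\alpha)]\,\dtext t$ is a harmless, indeed slightly cleaner, variant.
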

\begin{proof}
By the three point condition  in (\ref{Eq114}), we have
\begin{eqnarray}\label{1212}
\Delta_f(x,y)&=& M_K \left( \left| \frac{x-y}{x_\beta -y}  \right|\right) \Delta_f (x_\beta, y)\nonumber \\ &\leqslant&  M_K \left( \left| \frac{x-y}{x_\beta -y}  \right|\right)  M_K \left( \left| \frac{x_\beta-y}{x_\beta -x_\alpha}  \right|\right) \Delta_f (x_\beta, x_\alpha)
\end{eqnarray}
We need to estimate the fractions under the function $M_K$; the numerators from above and the denominators from below. For this, choose and fix the time parameters $t,s \in [\alpha, \beta]$  such that
\[\left|x(t) -x(s)\right| = \dist (\mathfrak X, \Upsilon)= \beta -\alpha\]
By  Mean Value Theorem,
\begin{eqnarray}
|x-y|&\leqslant& \left| x-x(t)\right| + \left| x(t)-y(s)\right| + \left| y(s)-y \right| \nonumber \\ &\leqslant &\left| \dot{x} (\xi) \right| (\beta - \alpha) + (\beta -\alpha) + \left| \dot{y} (\zeta) \right| (\beta - \alpha) \nonumber
\end{eqnarray}
for some $\alpha \leqslant \xi , \zeta \leqslant \beta$. On the other hand $|\dot{x}|= |f(x)|\leqslant d \,  M_K(|x|) \leqslant d\,  M_K(R)$. Similarly, $|\dot{y}| \leqslant d\,  M_K(R)$. Thus, we have
\[|x-y|  \leqslant \left[ 1+2 dM_K(R) \right] (\beta -\alpha)  \]
and, in particular,
\[\left|x_\beta-y \right|  \leqslant \left[ 1+2d M_K(R) \right] (\beta -\alpha)  \]
As regards the denominators, we have
\[\left|x_\beta-y \right| \geqslant \dist (\mathfrak X , \Upsilon) = \beta - \alpha \]
and, again by the Mean Value Theorem,
\begin{eqnarray*}  \left|x_\beta-x_\alpha \right| = \left| \dot{x}(\zeta)  \right| (\beta -\alpha)  = \left| f(x_\zeta) \right| (\beta -\alpha)    \geqslant m_K(r)(\beta -\alpha)
\end{eqnarray*}
The inequality (\ref{1210}) is now immediate from (\ref{1212}), simply set
\[C_r^R   =M_K \big(1+2dM_K(R)\big)\cdot M_K \left( \frac{1+2dM_K(R)}{m_K(r)}  \right)  \]

To prove (\ref{1211}) we appeal to the following
\begin{lemma}\label{ApLe33}
Let $A,B,Z$ be vectors of an inner product space, $|Z|=1$. Then
\[\langle A-B,Z\rangle \leqslant |A|-|B| + \frac{|B-\lambda Z|^2}{2\lambda}\]
for all $\lambda >0$.
\end{lemma}
\begin{proof}
\begin{eqnarray*}
\langle A-B,Z\rangle \leqslant |A| - \langle B,Z\rangle \leqslant |A|-|B| + \frac{|B-\lambda Z|^2 - (|B|-\lambda)^2}{2 \lambda}
\end{eqnarray*}
\end{proof}
We take   $X=f(x_\beta)$, $Y=f(x_\alpha)$, $Z= \frac{x_\beta - x_\alpha}{|x_\beta -x_\alpha |}$ and $\lambda = \frac{|x_\beta -x_\alpha|}{\beta-\alpha}$. This gives us the estimate
\begin{equation}\label{Eq1210}
\Delta_f (x_\beta, x_\alpha) \leqslant \left| f(x_\beta) \right|  - \left| f(x_\alpha) \right| + \frac{\left| f(x_\beta)- \frac{x_\beta-x_\alpha}{\beta -\alpha} \right|^2}{2 \left|\frac{x_\beta-x_\alpha}{\beta-\alpha}  \right|}
\end{equation}
The letter term   is handled with the aid of   the  Mean Value Theorem. Precisely, there is $  \xi \in [\alpha, \beta]$ such that
\begin{eqnarray*}
 \frac{\left| f(x_\beta)- \frac{x_\beta-x_\alpha}{\beta -\alpha} \right|}{2 \left|\frac{x_\beta-x_\alpha}{\beta-\alpha}  \right|}  = \frac{\left| f(x_\beta)- \dot{x}(\xi) \right|^2}{2 \left| \dot{x}(\xi) \right|}= \frac{\left| f(x_\beta) -f(x_\xi)\right|^2}{2\left| f(x_\xi) \right|}   \leqslant  \frac{\diam^{\, 2} f(\mathfrak X)}{2\,  m_K \left( r\right)}
\end{eqnarray*}
as desired.  The proof of (\ref{Kav49}) proceeds as follows
\begin{eqnarray*}
\log \frac{|x_\beta-y_\beta|}{|x_{\alpha } - y_{\alpha}|} &=& \int_{\alpha}^{\beta} \frac{\dtext }{\dtext t} \log |x(t)-y(t)| \, \dtext t \nonumber \\ &=& \int_{\alpha}^{\beta} \left\langle  \dot{x}(t)- \dot{y}(t) , \, \frac{x(t)-y(t)}{|x(t)-y(t)|^2}  \right \rangle \, \dtext t \\
&=& \int_{\alpha}^{\beta} \frac{\Delta_f \big(x(t), y(t)  \big)}{|x(t)-y(t)|}\, \dtext t  \le C_r^R \Delta_f (x_\beta, x_\alpha)\\
&\leqslant & C_r^R   \left[ \left| f(x_\beta)  \right|  - \left| f(x_{\alpha})  \right|   + \frac{\diam^{\, 2} f( \mathfrak X)}{2m_K(r)} \right]
\end{eqnarray*}
Here, for the inequality before the last, we estimated the numerator in the integrand by   (\ref{1210}) while for the denominator we observed
\[|x(t)-y(t)|  \ge \dist (\mathfrak X , \Upsilon) = \beta -\alpha\]
Then (\ref{Kav49}) follows from (\ref{1211}). The proof of Lemma \ref{Pr121} is complete.
\end{proof}

\section{Quadratic variation along $C^1$-arcs}\label{quadraticsection}

A parametric curve in $\R^n$ is a continuous function $x=x(t)$ defined in an interval $I$  (bounded or unbounded) with values in $\R^n$.   The orientation of a parametric curve is given in the direction of increasing parameter. If $x: I \to \R^n$ is one-to-one, then $x=x(t)$ is called a simple parametric curve; it is called an arc if $I=[\alpha ,\beta]$ is closed and bounded, in which case $x(\alpha)$ and $x(\beta)$ are called the left and the right endpoints. Let $\Gamma= \{x(t) \colon \alpha \le t \le \beta \}$.  A partition of parameters    $\alpha=t_0 < t_1 < \cdots < t_N =\beta$ gives rise to a partition of the curve $\Gamma$, with    $x_j=x(t_j)$, $j=0,1, ..., N$,  called partition points of  $\Gamma$. Furthermore, to every interval $[t_{j-1}, t_j]$ there corresponds a subarc $\gamma_j = x[t_{j-1}, t_j]$ in $\Gamma$.   The arc length of   $\Gamma$ is   denoted by
$|\Gamma|$.

Recall that $p$-variation, $p \ge 1$, of a continuous map $f \colon \Omega \to \R^n$ along a compact $C^1$-arc $\Gamma \subset \Omega$ is defined by
\begin{equation}
\left| f(\Gamma) \right|_p = \sup \left( \sum_{\nu=1}^N \left| \diam f(\gamma_\nu) \right|^p  \right)^\frac{1}{p} < \infty
\end{equation}
where the supremum runs over all finite partitions of $\Gamma$ into subarcs $\gamma_1, \gamma_2 , \cdots , \gamma_N$. Note that
\[\left| f(\Gamma) \right|_p \le \left| f(\Gamma) \right|_q\, , \quad \quad \textnormal{ when } 1 \le q \le p\]
When $p=1$ we recover the classical concept of bounded variation, and denote $|f(\Gamma)|_1 = |f(\Gamma)|$ for simplicity.

The quadratic variation along $C^1$-arcs of any homeomorphism $f \colon \mathbb C \to \mathbb C$ in $W^{1,2}_{\loc}(\mathbb C)$ is finite, see \cite[Theorem 4.3]{Ma}.  We shall demonstrate this property, together with specific bounds, for planar $K$-quasiconformal mappings.
\begin{theorem}\label{Th81}
Let $f \colon \mathbb C \to \mathbb C$ be a $K$-quasiconformal mapping and $\Gamma$ a $C^1$-arc in $\mathbb C$. Then $\left|f(\Gamma)\right|_2 < \infty$. If, moreover, $f \in \mathcal F_K(d)$ and $\Gamma$ lies in the annulus $\mathbb C_r^R$, then
\begin{equation}\label{Eq815}
\left| f(\Gamma) \right|_2 \leqslant C^R_r   \diam f(\Gamma)
\end{equation}
\end{theorem}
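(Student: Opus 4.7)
The strategy is to combine the quasiconformal distortion estimate on discs with a covering argument along $\Gamma$. The cornerstone is the inequality
\[
(\diam f(B))^2 \le C_K\,|f(B)|
\]
for any disc $B \subset \mathbb C$ and $K$-quasiconformal $f$. It follows from the three--point condition applied on $\partial B$: the image $f(B)$ has bounded roundness, since the ratio of the outer radius to the inner radius about the image of the centre is at most $M_K(1)$, so $|f(B)|$ dominates $(\diam f(B))^2$ up to the factor $C_K$.

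To prove $|f(\Gamma)|_2 < \infty$, I would partition $\Gamma$ into subarcs $\gamma_\nu$, enclose each in a disc $B_\nu$ of radius $d_\nu = \diam \gamma_\nu$ centred on $\gamma_\nu$, and sum:
\[
\sum_\nu (\diam f(\gamma_\nu))^2 \le C_K \sum_\nu |f(B_\nu)| \le C_K N_0\,|f(V)|, \qquad V = \bigcup_\nu B_\nu,
\]
where the bounded overlap multiplicity $N_0$ of $\{B_\nu\}$ is extracted via a dyadic grouping of the scales $d_\nu$ together with the $C^1$ length bound $|\Gamma \cap B(z,\rho)| \le C_\Gamma \rho$. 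Since $V$ lies in a bounded neighbourhood of $\Gamma$, $|f(V)| < \infty$, recovering Mattila's Theorem 4.3.

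For the quantitative estimate \eqref{Eq815}, I would exploit the uniform H\"older continuity of $f \in \mathcal F_K(d)$ on the compact annulus $\mathbb C_{r/2}^{2R}$ provided by quasiconformality. Choose $\varepsilon_0 = \varepsilon_0(K,d,r,R)$ so that any $\varepsilon_0$-tube $U$ of $\Gamma$ inside $\mathbb C_{r/2}^{2R}$ satisfies $\diam f(U) \le 2\,\diam f(\Gamma)$. Split a given partition into a small-scale part $\mathcal P_S = \{d_\nu < \varepsilon_0\}$ and a large-scale part $\mathcal P_L = \{d_\nu \ge \varepsilon_0\}$. For $\mathcal P_S$, the tube $V_S = \bigcup_{\mathcal P_S} B_\nu$ lies in $U$, so $f(V_S)$ is a connected subset of a disc of radius $\diam f(V_S)$ and $|f(V_S)| \le \pi(\diam f(V_S))^2 \le 4\pi (\diam f(\Gamma))^2$; the first display then bounds the $\mathcal P_S$ contribution by a multiple of $(\diam f(\Gamma))^2$. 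The few subarcs in $\mathcal P_L$ are handled directly, using the trivial estimate $\diam f(\gamma_\nu) \le \diam f(\Gamma)$ combined with the distortion estimate applied at the fixed scale $\varepsilon_0$, yielding a contribution of the same form. Assembling both parts produces $|f(\Gamma)|_2 \le C_r^R \diam f(\Gamma)$.

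The main obstacle is to reconcile two opposing constraints: the bound $|f(V)| \le C(\diam f(\Gamma))^2$ requires $V$ to be a thin tube close to $\Gamma$, while the bounded overlap argument must apply to arbitrary partitions in which some $d_\nu$ may be macroscopic. The large/small splitting above bridges this, but its success hinges on choosing $\varepsilon_0$ quantitatively in terms of $r,R,K,d$ only, so that the resulting constant $C_r^R$ has no hidden dependence on $\Gamma$ itself. Justifying this uniform choice, together with the dyadic bounded-overlap claim for $\{B_\nu\}$ along an arbitrary $C^1$ arc, is the principal technical step.
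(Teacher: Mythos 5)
The covering step is where your argument breaks down: the discs $B_\nu$ of radius $d_\nu=\diam\gamma_\nu$ centred on $\gamma_\nu$ do \emph{not} have bounded overlap for arbitrary partitions of a $C^1$-arc. The dyadic grouping together with the length bound $|\Gamma\cap B(z,\rho)|\le C_\Gamma\,\rho$ shows only that, for a fixed point $w$ and a fixed dyadic scale $j$, at most $C_\Gamma$ of the discs with $d_\nu\sim 2^j$ contain $w$; it does not control the number of scales present in the partition, and that number is unbounded. Concretely, take $\Gamma=[0,1]$ with partition points $a_0=0$, $a_\nu=4^{\nu-N}$ for $\nu=1,\dots,N$: every disc $B_\nu$ (whether centred at the left endpoint or at the midpoint of $\gamma_\nu=[a_{\nu-1},a_\nu]$) contains the origin, so the multiplicity equals $N$ and the inequality $\sum_\nu|f(B_\nu)|\le N_0\,|f(V)|$ fails with any constant independent of the partition. (One can salvage a logarithmic overlap bound and then invoke higher integrability of the Jacobian, but that is a different and heavier argument than the one you sketch.) The paper avoids overlap counting altogether: after reducing to short subarcs on which the unit tangent oscillates by at most $\tfrac{\sqrt2}{2}$, a rigid motion turns $\Gamma$ into a graph of slope at most $1$ over an interval, so consecutive subarcs lie in disjoint vertical strips and can be enclosed in pairwise \emph{disjoint} squares $Q_j$ inside one controlled square $Q$; then $\sum_j\diam^2 f(\gamma_j)\le C_K\sum_j|f(Q_j)|=C_K\,|f(\bigcup Q_j)|\le C_K\,|f(Q)|\le C_K\diam^2 f(\Gamma)$. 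Disjointness, not bounded overlap, is the engine of the proof, and it is obtained from the $C^1$ regularity rather than from a length density bound.

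Your treatment of \eqref{Eq815} has a further problem even if the covering step were granted. The number of subarcs in $\mathcal P_L$ is bounded only by $|\Gamma|/\varepsilon_0$, and the length of a general $C^1$-arc lying in $\mathbb C_r^R$ is not controlled by $r,R,K,d$ (it can spiral arbitrarily); moreover the requirement $\diam f(U)\le 2\diam f(\Gamma)$ for the $\varepsilon_0$-tube forces $\varepsilon_0$ to be compared with $\diam\Gamma$ via quasisymmetry, so $\varepsilon_0$ cannot be chosen in terms of $r,R,K,d$ alone. In the paper these quantities are controlled because \eqref{Eq815} is actually proved for $\Gamma$ an \emph{integral curve} of $f$: the reverse triangle inequality \eqref{Eq122} and the monotonicity of $t\mapsto|x(t)|$ bound the length of $\Gamma$ and show that the unit tangent $f/|f|$ oscillates by at most $\tfrac{\sqrt2}{2}$ on each of $\ell=\ell(r,R,K,d)$ concentric subannuli, which is exactly what makes the final constant depend only on $r,R,K,d$. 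To repair your quantitative part you would need to either restrict to integral curves, as the paper's proof implicitly does, or add a hypothesis controlling the length and $C^1$-modulus of $\Gamma$.
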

\begin{proof}
 Let $z=z(\tau)$, $\alpha \leqslant \tau \leqslant \beta$, be the arc-length parametrization of $\Gamma$; that is, $|\dot{z} (\tau)| \equiv 1$ and $|\Gamma| = \beta-\alpha$. The $C^1$-modulus of regularity of $\Gamma$ is defined by
\begin{equation}\label{Eq82}
\Lambda (\tau) = \sup \left\{ |\dot{z}(t) - \dot{z}(s)| \colon  \alpha\leqslant t , \, s \leqslant \beta , \, |t-s| \leqslant \tau  \right\}
\end{equation}
Clearly, the function $\Lambda \colon [0, |\Gamma|] \to [0,2]$   is continuously nondecreasing and $\Lambda (0)=0$. By the definition, we have
\begin{equation}
\left| \dot{z}(t)- \dot{z}(s)  \right| \leqslant \Lambda (t-s) \, , \quad  \textnormal{ for } \alpha \leqslant s \leqslant t \leqslant \beta
\end{equation}
We first consider short arcs, assuming that $\Lambda (|\Gamma|)  \leqslant \frac{\sqrt{2}}{2}$, or equivalently,
\begin{equation}\label{Eq84}
\left| \dot{z}(t)- \dot{z}(s)  \right| \leqslant \frac{\sqrt{2}}{2} \quad \textnormal{ for all } \alpha \leqslant s \leqslant t \leqslant \beta
\end{equation}
{\bf Claim.}
{\it Under the assumption (\ref{Eq84}) we have
\begin{equation}\label{Eq85}
\left| f(\Gamma) \right|_2 \leqslant C_K \diam f(\Gamma)
\end{equation}
where $C_K$ depends only on the distortion $K$ of the mapping $f$.}\\ \\
{\it Proof of Claim.}
With the aid of a rigid motion   we place $\Gamma$ into a position in which its endpoints  are real numbers, say the left endpoint is   the origin and the right endpoint is a positive number $L$.
By  the Mean Value Theorem, there exists a middle point $\zeta \in [\alpha,\beta]$ such that
\[1 = \frac{z(\beta)-z(\alpha)}{|z(\beta)-z(\alpha)|}= \dot{z}(\zeta )\]
Then, in view of Condition (\ref{Eq84}),
\begin{equation}\label{Eq86}
|1- \dot{z}(\xi)| = \left| \dot{z} (\zeta) - \dot{z} (\xi)   \right| \leqslant \frac{\sqrt{2}}{2} \quad \textnormal{ for every } \xi \in [\alpha,\beta]
\end{equation}
This,  by the Mean Value Theorem again, yields
\begin{equation}\label{Eq87}
\left|1- \frac{z(t)-z(s)}{|z(s)-z(t)|}  \right| \leqslant \frac{\sqrt{2}}{2} \quad \textnormal{ for  }  \alpha \leqslant s < t \leqslant \beta
\end{equation}
Then  (\ref{Eq86})  combined with the identity $\left| \dot{z}(\xi) \right| =1$ gives
\begin{equation}\label{Eq88}
\left| \im \dot{z}(\xi)  \right| \leqslant \frac{\sqrt{7}}{3} \re \dot{z}(\xi) \leqslant \re \dot{z}(\xi)
\end{equation}
for every $\alpha \leqslant \xi \leqslant \beta$. In other words, the function $t \to \re z(t)$ is strictly increasing from $0$ to $L$. In particular, $\Gamma$ becomes a graph of a function over the interval $[0,L]$.
Given any parameters $\alpha \leqslant t, s \leqslant \beta$, by   Cauchy's Mean-Value Theorem, we have
\[ \frac{\im z(t)  -\im z(s) }{\re z(t) -\re z(s)}= \frac{\im \dot{z} (\xi)}{\re \dot{z} (\xi)} \]
for some $\xi \in [s,t]$. If we combine this with (\ref{Eq88}), we obtain
\begin{equation}\label{Eq89}
\left| \im z(t)- \im z(s)  \right| \leqslant \re z(t) - \re z(s)
\end{equation}
In particular, letting $s=\alpha$, we see that
\begin{equation}\label{Eq810}
\left| \im z(t) \right| \leqslant \re z(t) \leqslant L
\end{equation}
Next, we  choose and fix an   arbitrary partition points of $\Gamma$. Denote them by  $0=z_0, z_1 , \cdots , z_{N-1}, z_N=L$.  We consider the rectangles
\[\mathcal R_j= \big\{ z \colon \re z_{j-1} \le \re z \le \re z_j \, , \;  -L \le \im z \le L  \big\}\]
and the subarcs of $\Gamma$,
\[\gamma_j = \mathcal R_j \cap \Gamma \, , \quad j=1,2, ..., N\]
Inequality (\ref{Eq89}) shows that
\[ \underset{z\in \gamma_j}{\max} \{\im z\}\,  -\,  \underset{z\in \gamma_j}{\min} \{ \im z \}  \le \re z_j - \re z_{j-1} \]
This  in turn allows us to confine each arc $\gamma_j$ in a square $Q_j \subset \mathcal R_j$. Such squares are mutually disjoint and   lie in a square $Q$  centered at $0$ and of side length $2L$
\[Q= \big\{ z \colon -L \le \re z \le L \, , \quad -L \le \im z \le L   \big\}\]
 We note that $\Gamma$ joins $\partial Q$ with the center of $Q$. Therefore
\[   \underset{z\in \partial Q}{\min} \left|  f(z)-f(0) \right| \le \diam f(\Gamma)  \]
On the other hand, since $f$ is $K$-quasiconformal, we have
\begin{eqnarray*}
\left|f(Q)\right| \le \pi \, \underset{z\in \partial Q}{\max} \left| f(z)-f(0)  \right|^2 \le C_K \underset{z\in \partial Q}{\min} \left| f(z)-f(0)  \right|^2 \le C_K \diam^{\, 2} f(\Gamma)
\end{eqnarray*}
We also have the reverse type estimates for the squares $Q_j$, namely
\[\diam^{\, 2} f(Q_j) \le C_K \left| f(Q_j) \right|\]
This is a consequence of $K$-quasiconformality of $f$, as well.

Now we are ready to  estimate the quadratic variation of $f$ along $\Gamma$. As cubes $Q_j$ are mutually disjoint, we have
\begin{eqnarray}
\sum_{j=1}^N  \diam^{ \, 2} f(\gamma_j) &\leqslant & \sum_{j=1}^N \diam^{\, 2} f(Q_j)  \leqslant C_K \sum_{j=1}^N \left|  f(Q_j)\right| \nonumber \\
&=& C_K \left| f \left( \cup Q_j \right)  \right| \leqslant C_K \left|f(Q ) \right| \leqslant C_K  \diam^{\, 2} f(\Gamma)
\end{eqnarray}
This completes the proof of our Claim. \vskip0.3cm

To estimate $|f(\Gamma)|_2$ for long $C^1$-arcs, we partition  $\Gamma$ into $\ell $ disjoint subarcs, say $\Gamma = \Gamma_1 \cup \cdots \cup \Gamma_\ell $, where $\ell$ is large enough to ensure condition (\ref{Eq84}) on each subarc. We fix this partition.  Now, let $\gamma_1, \gamma_2, ..., \gamma_N$ be any partition of $\Gamma$ into subarcs $\gamma_j$, $1 \leqslant j \leqslant N$, to be used for computing the quadratic variation of $f$ along $\Gamma$. There are two kinds of subarcs in this partition. The first kind of the subarcs, denoted by $\gamma_j^\prime$, are those which  lay entirely in one of $\Gamma_1, ..., \Gamma_\ell$. Certainly, using (\ref{Eq85}),  we have
\[\sum \diam^{\, 2} f(\gamma_j^\prime) \leqslant \sum_{\nu =1}^\ell \left| f(\Gamma_\nu )  \right|_2^2 \leqslant \ell \,  C_K \diam^{\, 2} f(\Gamma)   \]
Then, there are  at most $\ell$ remaining subarcs, denoted by $\gamma_j^{\prime \prime}$. Each of them contains at least one endpoint of the partition $\Gamma=\Gamma_1 \cup \cdots \cup \Gamma_\ell$.  For these subarcs we have trivial estimate
\[\sum \diam^{\, 2} f (\gamma_j^{\prime \prime}) \leqslant \sum  \diam^{\, 2} f(\Gamma) \le \ell \,  \diam^{\, 2} f(\Gamma)  \]
In summary
\[\sum_{j=1}^N \diam^{\, 2} f (\gamma_j) \leqslant \ell (1+C_K) \diam^{\, 2} f(\Gamma)< \infty\]
Since the partition $\gamma_1, ..., \gamma_N$ of $\Gamma$ was chosen arbitrarily, it follows that
\begin{equation}\label{Eq814}
\left| f(\Gamma) \right|_2^2 \leqslant \ell (1+C_K) \diam^{\, 2} f(\Gamma) < \infty
\end{equation}
as desired.

When $\Gamma \subset \mathbb C_r^R$ is an integral curve of a reduced $K$-quasiconformal field this   estimate   lets us deduce specific bound of the quadratic variation. \\
{\it Proof of (\ref{Eq815}).} Let $z=z(\tau)$ be the arc-length parametrization of $\Gamma$; that is, $\dot{z}(\tau) = \frac{f(z(\tau))}{|f(z(\tau))|}$. We aim to partition $\Gamma$ into subarcs $\Gamma_1, ..., \Gamma_\ell$ so that
\begin{equation}\label{Eq816}
\left|    \frac{f(a)}{|f(a)|}- \frac{f(b)}{|f(b)|} \right|  \leqslant \frac{\sqrt{2}}{2} \, , \quad \textnormal{whenever }a,b \in \Gamma_\nu
\end{equation}
For this, we observe that
\begin{eqnarray*}
\left|    \frac{f(a)}{|f(a)|}- \frac{f(b)}{|f(b)|} \right|  & = &  \left|    \frac{f(a)- f(b)}{|f(a)|}- \frac{f(b)}{|f(b)|} \frac{|f(a)|-|f(b)|}{|f(a)|} \right| \\
&\leqslant &  \frac{2\, |f(a)-f(b)|}{|f(a))| }\leqslant 2 M_K \left( \left|\frac{a-b}{a} \right| \right) \\ &\leqslant &2 M_K \left( \frac{a-b}{r}\right) \leqslant \frac{\sqrt 2}{2}
\end{eqnarray*}
provided $|a-b| \leqslant \epsilon_Kr$, where $\epsilon_K$ is determined from the equation  $M_K(\epsilon _K) = \frac{\sqrt{2}}{4}$. In view of   the reverse triangle inequality (\ref{Eq122}) it suffices to  make the partition of $\Gamma$ fine enough to satisfy
\begin{equation}\label{Kav514}
  \big||b|-|a|\big| \leqslant \frac{ \epsilon_K \cdot r}{C_r^R} \quad \textnormal{for } a,b \in \Gamma_\nu, \;  \nu =1,2, ..., \ell
\end{equation}
To this effect we divide the annulus $\mathbb C^R_r$ into $\ell$ annuli  $\mathbb C_{r_1}^{R_1}$, ..., $\mathbb C_{r_\ell}^{R_\ell}$, each of   width $R_\nu -r_\nu = \frac{1}{\ell} (R-r)$. Inequality (\ref{Kav514}) yields a sufficient lower bound for $\ell$.
\[\ell \ge \frac{(R-r) C_r^R }{r   \epsilon_K}\]
Finally, we notice that  the intersections  $\Gamma_\nu=\Gamma \cap \mathbb C^{R_\nu}_{r_\nu}$ are subarcs of $\Gamma$, because the function $t \to |x(t)|$ is strictly increasing along $\Gamma$. Inequality (\ref{Eq816}) now holds for all $a,b \in \Gamma_\nu$, $\nu=1,2, ..., \ell$. Substitute this integer value $\ell=\ell (r,R,K, d)$ into (\ref{Eq814}) to conclude with (\ref{Eq815}).
\end{proof}

\section{A partition of two curves}
\begin{lemma}\label{ApLe1}$\textnormal{\textsc{(Partition Lemma)}}$ Suppose we are given two continuous functions $x,y \colon [-\infty, t_0] \to \R^n$ such that $x(-\infty) = y(-\infty)$, whereas $x(t_0) \ne y(t_0)$. Then there exists (unique) sequence $t_0>t_1> \cdots > t_k >t_{k+1} \cdots \to t_\infty \geqslant - \infty$, such that
\begin{equation}\label{ApEq1}
\underset{t_{k+1} \leqslant t,s \leqslant t_k }{\inf} |x(t)-x(s)| = t_k - t_{k+1}\, , \quad x(t_\infty)=y(t_\infty)
\end{equation}
If, in addition,   $x$ and $y$ are continuously differentiable, then for every $\tau \in [t_{k+1}, t_k]$ we have
\begin{equation}\label{ApEq2}
\left| x(t_k) -y(t_k) \right| \leqslant (1+C) \, |x(\tau) - y(\tau)|
\end{equation}
where
\begin{equation}\label{ApEq3}
C= \underset{\tau \leqslant t \leqslant t_k}{\sup} \left( \left| \dot{x}(t) \right| + \left| \dot{y}(t) \right|  \right) \; \footnote{It will be important for the use of (\ref{ApEq2}) that the supremum in (\ref{ApEq3}) runs over the interval $[\tau, t_k]$, not over the entire interval $[t_{k+1},t_k]$.}
\end{equation}
\end{lemma}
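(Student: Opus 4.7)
The plan is to build the sequence recursively, starting from $t_0$. Suppose $t_0,t_1,\dots,t_k$ have been chosen and $x(t_k)\ne y(t_k)$. I would consider the two functions
\[
\phi(t)=\dist\bigl(x([t,t_k]),\,y([t,t_k])\bigr),\qquad \psi(t)=t_k-t,\qquad t\in[-\infty,t_k],
\]
and set $g=\phi-\psi$. The key observations are that $\phi$ is continuous and nondecreasing in $t$ (enlarging the arcs only brings them closer), satisfies $\phi(t_k)=|x(t_k)-y(t_k)|>0$, and tends to $0$ as $t\to-\infty$ because of the common limit $x(-\infty)=y(-\infty)$. Consequently $g(t_k)>0$ whereas $g(t)\to-\infty$, so by the intermediate value theorem the set $\{t<t_k:g(t)=0\}$ is nonempty; I would define $t_{k+1}$ to be its supremum. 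This is the unique choice producing the identity (\ref{ApEq1}), and by maximality $g>0$ on $(t_{k+1},t_k]$, equivalently $\phi(t)>t_k-t$ there.

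The process either terminates when $x(t_k)=y(t_k)$ for some $k$ (in which case I set $t_\infty=t_k$), or it produces an infinite strictly decreasing sequence $t_k\searrow t_\infty\in[-\infty,t_0)$. In the latter case I must verify $x(t_\infty)=y(t_\infty)$. If $t_\infty=-\infty$ this is the hypothesis; if $t_\infty$ is finite then $t_k-t_{k+1}\to 0$, hence $\phi(t_{k+1})\to 0$. Choosing for each $k$ near-minimizing pairs $a_k\in x([t_{k+1},t_k])$, $b_k\in y([t_{k+1},t_k])$ with $|a_k-b_k|\leq \phi(t_{k+1})+1/k$, continuity of $x$ and $y$ forces $a_k\to x(t_\infty)$ and $b_k\to y(t_\infty)$, so $|x(t_\infty)-y(t_\infty)|=\lim|a_k-b_k|=0$.

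For the quantitative estimate (\ref{ApEq2}) I would fix $\tau\in[t_{k+1},t_k]$ and apply the triangle inequality
\[
|x(t_k)-y(t_k)|\leq |x(t_k)-x(\tau)|+|x(\tau)-y(\tau)|+|y(\tau)-y(t_k)|.
\]
By the Fundamental Theorem of Calculus the first and third summands are bounded by $(t_k-\tau)\sup_{[\tau,t_k]}|\dot x|$ and $(t_k-\tau)\sup_{[\tau,t_k]}|\dot y|$ respectively, which together give $C(t_k-\tau)$ with $C$ as in (\ref{ApEq3}); note that the sup legitimately runs only over $[\tau,t_k]$, matching the footnote. The factor $t_k-\tau$ is then absorbed into $|x(\tau)-y(\tau)|$ through the chain $|x(\tau)-y(\tau)|\geq \phi(\tau)\geq t_k-\tau$, the first step holding because $(\tau,\tau)$ is admissible in the infimum defining $\phi(\tau)$, and the second being the non-negativity of $g$ on $[t_{k+1},t_k]$ established in the construction. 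Combining yields (\ref{ApEq2}).

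The one step requiring genuine care, and which I expect to be the main (routine) obstacle, is the continuity of $\phi$. It reduces to a uniform-continuity estimate on compact subarcs of $x$ and $y$ (to bound $|\phi(t)-\phi(t')|$ when $|t-t'|$ is small), together with the common limit $x(-\infty)=y(-\infty)$ to produce $\phi(t)\to 0$ as $t\to-\infty$. Everything else is a clean application of the intermediate value theorem and the triangle inequality.
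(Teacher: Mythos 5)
Your construction is the paper's own argument in a thin disguise: your condition $g(\tau)=\phi(\tau)-(t_k-\tau)=0$ is exactly the paper's equation $\varphi_k(\tau)=\tau+\inf_{\tau\le t,s\le t_k}|x(t)-y(s)|=t_k$, solved by the same monotonicity-plus-intermediate-value argument, and your proof of \eqref{ApEq2} is the same triangle inequality, with the step $t_k-\tau\le\phi(\tau)\le|x(\tau)-y(\tau)|$ replacing the paper's equivalent chain through $t_k-t_{k+1}$. The proposal is correct (and you rightly read \eqref{ApEq1} as the distance between the $x$-arc and the $y$-arc), so nothing further is needed.
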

\begin{proof}
We construct such  sequence $\{t_k\}$ by induction. Suppose we are given $t_0 >t_1 \cdots >t_k > - \infty$, $x(t_k) \ne y(t_k)$.  Consider the following function
\[\varphi_k (\tau) = \tau + \underset{\tau \leqslant t,s \leqslant t_k }{\inf} |x(t)-y(s)|\, , \quad \tau \in [-\infty, t_k] \]
Clearly, $\varphi_k$ is continuous and strictly increasing. Before we make the induction step, let us think of $k$ to be equal zero. Since $\varphi_k (t_k) >t_k$ and $\varphi_k (-\infty)=-\infty$, we find (unique) parameter $t_{k+1}<t_k$ such that $\varphi_k (t_{k+1})=t_k$. This means that
\[\underset{t_{k+1} \leqslant t,s \leqslant t_k }{\inf} |x(t)-y(s)| = t_k - t_{k+1}\]
In particular, $x(t_{k+1}) \ne y(t_{k+1})$. Now, the same reasoning provides  for  the induction step. We then obtain the desired decreasing sequence
\[ t_0 > t_1 > \cdots  > t_k >t_{k+1} \cdots \to t_\infty \geqslant -\infty  \]
Finally, if $t_\infty = - \infty$, then  $x(t_\infty) = x(-\infty) = y(-\infty) = y(t_\infty)$
If, however,  the sequence $\{t_k\}$ is converging to some finite number $t_\infty$, then by (\ref{ApEq1}) we conclude that $x(t_\infty)=y(t_\infty)$.

The proof of (\ref{ApEq2}) is a matter of triangle inequality combined with (\ref{ApEq1}).
\begin{eqnarray*}
\left|  x(t_k) -y(t_k)  \right| &\leqslant & \left|  x(\tau) -y(\tau)  \right| + \left|  x(t_k) -x(\tau) +y(\tau)-y(t_k)  \right| \\
&=& \left|  x(\tau) -y(\tau)  \right|  + \left| \int_\tau^{t_k}  \left[ \dot{x}(t) - \dot{y}(t) \right] \, \dtext t  \right| \\
&\leqslant & \left|  x(\tau) -y(\tau)  \right| + C \, \left|  t_k - \tau \right|    \\
&\le& |x(\tau)-y(\tau)|+ C |t_k -t_{k+1}| \\
&=& \left|  x(\tau) -y(\tau)  \right| + C  \underset{t_{k+1} \leqslant t,s \leqslant t_k }{\inf} |x(t)-y(s)| \\
&\leqslant & (1+C) \,  \left|  x(\tau) -y(\tau)  \right|
\end{eqnarray*}
as claimed.
\end{proof}

\section{Uniqueness, proof of Theorem \ref{ThMain}}
We have already established (\ref{EqI111}) and (\ref{EqI112}) by Corollary \ref{Cor42} and Inequality \ref{Eq40}. To complete the proof of Theorem \ref{ThMain} it remains to establish uniqueness of the local solutions. Let us state this task  explicitly:
\begin{proposition}\label{Th131}
Suppose we are given two local solutions of the differential system
\[\dot{x}(t)=f \big(x(t)\big)  \quad \textnormal{ and } \quad \dot{y}(t)=f \big(y(t)\big) \quad \mbox{ for } t \in (\alpha, \beta) \]
where $f\in \mathcal F_K(d)$ and $x(t^\prime) = y(t^\prime) \ne 0$ for some $t^\prime \in (\alpha, \beta)$. Then $x(t)=y(t)$ for all $t\in (\alpha, \beta)$.
\end{proposition}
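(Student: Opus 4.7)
The plan is to combine backward uniqueness (which is free from Proposition~\ref{Pr16} together with the Backward Uniqueness proposition for monotone fields) with a forward argument built out of the Partition Lemma~\ref{ApLe1}, the sharp estimate~(\ref{Kav49}) from Lemma~\ref{Pr121}, and the finite-quadratic-variation bound of Theorem~\ref{Th81}. Since every nondegenerate $f\in\mathcal F_K(d)$ is strictly monotone by Proposition~\ref{Pr16}, backward uniqueness immediately gives $x(t)=y(t)$ for all $t\le t'$ in the common interval, so it remains to prove forward uniqueness on $[t',\beta)$.

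Suppose for contradiction that the agreement set $\{t\ge t':x\equiv y\text{ on }[t',t]\}$ has a finite supremum $t_*<\beta$. By continuity $x(t_*)=y(t_*)=:z_*\ne 0$, and there exist $t_0>t_*$ arbitrarily close to $t_*$ with $x(t_0)\ne y(t_0)$. Fix one such $t_0$ and choose $0<r<R<\infty$ so that both $x[t_*,t_0]$ and $y[t_*,t_0]$ lie in the annulus $\mathbb C_r^R$. The Partition Lemma applies verbatim with the finite left endpoint $t_*$ in place of $-\infty$ (its proof uses only that the two curves agree at the left endpoint), producing a strictly decreasing sequence $t_0>t_1>\cdots\to t_\infty\in[t_*,t_0]$ with
\[\dist\bigl(x[t_{k+1},t_k],\,y[t_{k+1},t_k]\bigr)=t_k-t_{k+1},\qquad x(t_\infty)=y(t_\infty).\]
This matching of common time-length with mutual distance is precisely what~(\ref{Kav49}) demands.

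Applying~(\ref{Kav49}) to each pair $(\mathfrak X_k,\Upsilon_k):=(x[t_{k+1},t_k],\,y[t_{k+1},t_k])$ and telescoping over $k=0,\dots,N-1$ yields
\[\log\frac{|x(t_0)-y(t_0)|}{|x(t_N)-y(t_N)|}\;\le\;C_r^R\left[\,|f(x(t_0))|-|f(x(t_N))|+\sum_{k=0}^{N-1}\diam^{\,2}f(\mathfrak X_k)\right].\]
The first bracketed difference stays in $[0,2dM_K(R)]$. The subarcs $\mathfrak X_k$ form a partition of the trajectory $\mathfrak X:=x[t_\infty,t_0]\subset\mathbb C_r^R$, which is a simple $C^1$-arc because $\dot x=f(x)$ is continuous with $|f|\ge m_K(r)>0$ on $\mathbb C_r^R$ and self-intersection is ruled out by the strict-increase property $|x(s)|<|x(t)|$ from~(\ref{Eq40}). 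Hence $\sum_{k=0}^{N-1}\diam^{\,2}f(\mathfrak X_k)\le|f(\mathfrak X)|_2^{\,2}<\infty$ by Theorem~\ref{Th81}, so the right-hand side stays bounded as $N\to\infty$, whereas $|x(t_N)-y(t_N)|\to|x(t_\infty)-y(t_\infty)|=0$ drives the left-hand side to $+\infty$. This contradiction proves $x\equiv y$ on $[t',\beta)$, completing the proof. The main technical obstacle is ensuring that the subarcs form a partition of a genuine simple $C^1$-arc inside an annulus on which $f$ is bounded above and away from zero, which is exactly where the strict-increase property and the choice of $(r,R)$ are invoked.
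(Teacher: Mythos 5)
Your argument is correct and follows essentially the same route as the paper: reduce to forward uniqueness from the last agreement time, apply the Partition Lemma on the interval where the curves separate, feed the resulting arcs into inequality (\ref{Kav49}), telescope, and bound the surviving sum by the quadratic variation $|f(\Gamma)|_2^2$ from Theorem \ref{Th81} to contradict the blow-up of $\log|x(t_0)-y(t_0)|/|x(t_N)-y(t_N)|$. Your added checks --- that the Partition Lemma works with a finite left endpoint where the curves agree, and that the trajectory is a simple $C^1$-arc in an annulus where $|f|$ is bounded above and below --- are points the paper leaves implicit, but they do not change the argument.
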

\begin{proof}
The equality $x(t)=y(t)$ for $x \in (\alpha, t^\prime ]$ is immediate since the function $t \to |x(t)-y(t)|$ is nondecreasing. Suppose, to the contrary, that $|x(t)-y(t)| \not\equiv 0$. Thus, there exists $s\in [t^\prime , \beta)$ such that $x(s)=y(s)$ and $|x(t)-y(t)|>0$ for all $t\in (s, \beta)$. For notational convenience we can certainly assume that $s =0$. Therefore, $x(t) \ne y(t)$ for all $0<t< \beta$ and the common value $x(0)=y(0)$ is not the critical point of $f$. Choose and fix $t_0 \in (0, \beta)$. Thus we have
\[ x(t), y(t) \in \mathbb C^R_r \quad \textnormal{ for } 0 \leqslant t \leqslant t_0  \]
where we define \[r=|x(0)|=|y(0)|  \quad \mbox{ and } \quad R= \max \big\{|x(t_0)|, |y(t_0)|  \big\}\]

We shall make use of the partition
\[ t_0 >t_1 > \cdots t_k >t_{k+1} \to 0\]
as in Lemma \ref{ApLe1}. Accordingly,
\[\dist \{ x[t_{k+1}, t_k] , \, y[t_{k+1}, t_k]  \}= t_k-t_{k+1}\, , \quad k=0, 1, 2, ...   \]
Denote by $x_k$ and $y_k$ the values of $x$ and $y$ at time $t_k$, respectively. We also denote by $\gamma_k$ the arc $\gamma_k =\{x(t) \colon t_{k+1} \leqslant t < t_k  \}$. Then, in view of  Inequality (\ref{Kav49}) in Lemma \ref{Pr121}, we obtain
\begin{eqnarray*}
\log \frac{|x_k-y_k|}{|x_{k+1} - y_{k+1}|}  \leqslant  C_r^R   \left[ \left| f(x_k)  \right|  - \left| f(x_{k+1})  \right|   + \diam^{\, 2} f(\gamma_k)  \right]
\end{eqnarray*}
The telescoping structure of   the terms in this inequality  helps  us to sum them up, with substantial  cancellations. Summing with respect to  $k=0,1, 2, ... , m-1$,  the surviving terms are:
\begin{equation}\label{Eq131}
\log \left| \frac{x_0-y_0}{x_m -y_m} \right| \leqslant C_r^R  \left[ \left| f(x_0)  \right|  - \left| f(x_{m})  \right|   +  |f(\Gamma)|_2^2  \right] \le C_r^R
\end{equation}
where   $|f(\Gamma)|_2$ stands for the quadratic variation of $f$ along $\Gamma=\{x(t) \colon 0 \leqslant t \leqslant t_0\}$. By Theorem \ref{Th81},  the right hand side of (\ref{Eq131}) is bounded by a constant $C_r^R$ independent of $m$. However, the left hand side increases to $+\infty$ as $m \to -\infty$, because $x_m -y_m = x(t_m)-y(t_m) \rightarrow x(0)-y(0)=0$. This contradiction proves Theorem \ref{Th131}
\end{proof}
\section{Proof of Theorem \ref{ThMain2}}
First we prove Inequality (\ref{EqI113}). Let $(\alpha_1, \beta_1)$ denote the maximal interval of existence of the solution $x=x(t)$ of the system $\dot{x}=f(x)$ in $\mathbb C_0$, as in Theorem \ref{ThMain}. It will be convenient to view $x$ as a solution in $\mathbb C$ defined in the interval $[-\infty , \beta_1)$, by setting $x(t)=0$ for $-\infty \le t \le \alpha_1$. The extended solution is a continuous function $x \colon [-\infty , \beta_1 ) \to \mathbb C$. Now  consider another extended solution  $y \colon [-\infty , \beta_2) \to \mathbb C$. Suppose that at some time $t_0 < \min \{\beta_1 , \beta_2\}$ we have $x(t_0) \ne y(t_0)$. In particular, $t_0 \ne - \infty$. We make use of  a decreasing sequence
\[t_0 > t_1 > \cdots t_k > t_{k+1} \to t_\infty \]
as in Lemma \ref{ApLe1}. With the same arguments as were used in  (\ref{Eq131}) we obtain $|x(t_0)-y(t_0)| \le C^R_r |x(t_k)-y(t_k)|$. Then the complementary inequality (\ref{ApEq2}) yields
\[\left| x(t_0) - y(t_0)  \right|  \le C_r^R \left| x(t_k) - y(t_k)  \right| \le C_r^R \left| x(\tau) - y(\tau)  \right| \]
provided
\[r \le |x(\tau)| \le |x(t_0)| \le R\]
and
\[r \le |y(\tau)| \le |y(t_0)| \le R\]
We just proved Inequality (\ref{EqI113}).  Now (\ref{EqI1122}) is immediate.
\begin{eqnarray*}
|x(t)-y(s)| &\le& |x(t)-x(s)| + |x(s)-y(s)|\\ &\le& \left| \dot{x}(\xi) \right| |t-s| + C^R_r |x(0)-y(0)|\\
&\le & M_K (R)|t-s| + C_r^R |x(0) -y(0)|
\end{eqnarray*}

\section{Bi-Lipschitz continuity of $\Phi$}\label{BLsection}
The purpose of this section is to prove the inequality (\ref{EqI15}). For this we represent $\Phi$ in quasipolar coordinates. Then (\ref{EqI15}) is the same as
\begin{theorem}\label{Th141}
Given two points $z_1=(\rho_1, e^{i \theta_1})$ and $z_2=(\rho_2, e^{i \theta_2})$ in $\mathbb C^R_r$, we have
\begin{equation}\label{Eq7st}
c_r^R  \left| z_1-z_2\right|  \leqslant   \left| \rho_1 e^{i\theta_1} -  \rho_2 e^{i\theta_2} \right|  \leqslant  C_r^R  \left| z_1-z_2\right|
\end{equation}
\end{theorem}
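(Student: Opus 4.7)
The map $\Phi$ sends the quasipolar pair $(\rho,e^{i\theta})$ of a point $z$ to the polar expression $\rho e^{i\theta}$. Using
\[
\rho_1 e^{i\theta_1} - \rho_2 e^{i\theta_2} = (\rho_1 - \rho_2)\,e^{i\theta_1} + \rho_2\bigl(e^{i\theta_1} - e^{i\theta_2}\bigr)
\]
and $|\rho_1-\rho_2|=\bigl||z_1|-|z_2|\bigr|\le |z_1-z_2|$, the upper estimate in \eqref{Eq7st} reduces to
\[
|e^{i\theta_1}-e^{i\theta_2}|\le C_r^R\,|z_1-z_2|. \qquad (\ast)
\]
Write $\gamma_\theta(\rho)$ for the unique point of modulus $\rho$ on $\Gamma_\theta$, so $z_j=\gamma_{\theta_j}(\rho_j)$. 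The split
\[
|z_1-z_2|\le \bigl|\gamma_{\theta_1}(\rho_1)-\gamma_{\theta_1}(\rho_2)\bigr|+\bigl|\gamma_{\theta_1}(\rho_2)-\gamma_{\theta_2}(\rho_2)\bigr|
\]
bounds the first summand by $C_r^R|\rho_1-\rho_2|$ via the reverse triangle inequality \eqref{Eq122} along $\Gamma_{\theta_1}$, and together with the identity
\[
|\rho_1 e^{i\theta_1}-\rho_2 e^{i\theta_2}|^2=(\rho_1-\rho_2)^2+\rho_1\rho_2|e^{i\theta_1}-e^{i\theta_2}|^2\ge r^2|e^{i\theta_1}-e^{i\theta_2}|^2
\]
(and $|\rho_1-\rho_2|\le|\rho_1 e^{i\theta_1}-\rho_2 e^{i\theta_2}|$), the lower estimate in \eqref{Eq7st} reduces to
\[
\bigl|\gamma_{\theta_1}(\rho_2)-\gamma_{\theta_2}(\rho_2)\bigr|\le C_r^R\,|e^{i\theta_1}-e^{i\theta_2}|. \qquad (\ast\ast)
\]

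\textbf{Proof of $(\ast)$ (and of $(\ast\ast)$ by duality).} Choose $0<r_0\le\min\{r,1\}$ and $R_0\ge\max\{R,1\}$ depending only on $r,R,K,d$ so that all relevant trajectory arcs (from $z_j$ to $e^{i\theta_j}$ and a small extension needed below) lie in $\mathbb{C}_{r_0}^{R_0}$; this is possible because the modulus along any trajectory is monotone and satisfies $m_K(|x|)\le d|x|/dt\le dM_K(|x|)$ by \eqref{Eq116} and \eqref{Eq118}. Write $A=A_{r_0}^{R_0}$, $B=B_{r_0}^{R_0}$ for the constants of Theorem \ref{ThMain2}. Let $x_j(t)=\phi_t(z_j)$ satisfy $x_j(\tau_j)=e^{i\theta_j}$; assume WLOG $\tau_1\le\tau_2$. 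Applying Theorem \ref{ThMain2} with $t=s=\tau_1$ gives
\[
|e^{i\theta_1}-x_2(\tau_1)|\le B|z_1-z_2|,\qquad\text{hence}\qquad \bigl||x_2(\tau_1)|-1\bigr|\le B|z_1-z_2|.
\]
Since $d|x_2|/dt=\Delta_f(x_2,0)\ge m_K(r_0)$ on the enlarged annulus by \eqref{Eq116}, integration from $\tau_1$ to $\tau_2$ yields
\[
m_K(r_0)\,|\tau_1-\tau_2|\le \bigl||x_2(\tau_2)|-|x_2(\tau_1)|\bigr|=\bigl||x_2(\tau_1)|-1\bigr|\le B|z_1-z_2|.
\]
A second application of Theorem \ref{ThMain2} now delivers $(\ast)$:
\[
|e^{i\theta_1}-e^{i\theta_2}|=|x_1(\tau_1)-x_2(\tau_2)|\le A|\tau_1-\tau_2|+B|z_1-z_2|\le C_r^R|z_1-z_2|.
\]
The proof of $(\ast\ast)$ is dual: let $y_j(t)=\phi_t(e^{i\theta_j})$ and let $t_j$ be the unique time with $|y_j(t_j)|=\rho_2$ (so $y_j(t_j)=\gamma_{\theta_j}(\rho_2)$); the same two applications of Theorem \ref{ThMain2}, sandwiching the integrated lower bound on $d|y_j|/dt$, produce $|t_1-t_2|\le C|e^{i\theta_1}-e^{i\theta_2}|$ and then $(\ast\ast)$.

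\textbf{Principal difficulty.} The main technical step is to decouple the time and initial-data contributions in Theorem \ref{ThMain2} and extract a pure time bound $|\tau_1-\tau_2|\le C|z_1-z_2|$ (respectively $|t_1-t_2|\le C|e^{i\theta_1}-e^{i\theta_2}|$). This is achieved by combining Theorem \ref{ThMain2} with the uniform positive lower bound $\Delta_f(\cdot,0)\ge m_K(r_0)$ from \eqref{Eq116} and the observation that $|x_j(\tau_j)|=1$ for both $j$, which converts the spatial estimate at the common time $\tau_1$ into the time bound. A secondary bookkeeping point is the choice of the enlarged annulus $\mathbb{C}_{r_0}^{R_0}$ so that $x_2(\tau_1)$ and the extended flow arcs all remain within its bounds; this is handled quantitatively by the two-sided control $m_K(|x|)\le d|x|/dt\le dM_K(|x|)$.
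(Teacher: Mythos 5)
Your proposal is correct and follows essentially the same route as the paper: the paper isolates your $(\ast)$ and $(\ast\ast)$ as a single statement (Lemma \ref{Lem141}, bounding the distance between equal-modulus points on two trajectories by the distance between the arcs) and proves it, as you do, by synchronizing the two flows at a common time via Theorem \ref{ThMain2} and then traversing one trajectory to the target modulus. The only difference is in the bookkeeping of that last step: where you bridge the residual time offset with an explicit bound $|\tau_1-\tau_2|\le C|z_1-z_2|$ fed into the $A|t-s|$ term of \eqref{EqI1122}, the paper invokes the reverse triangle inequality \eqref{Eq122} along a single arc together with \eqref{EqI113} — both devices rest on the same radial speed estimates \eqref{Eq116} and \eqref{Eq118}.
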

\begin{proof}
First we prove the following
\begin{lemma}\label{Lem141}
Given two integral arcs $\mathfrak X, \Upsilon \subset \mathbb C_r^R$ and points $a\in \mathfrak X$, $b \in \Upsilon$ such that $|a|=|b|= \rho$. Then
\[|a-b| \leqslant C_r^R \dist (\mathfrak X , \Upsilon)\]
In other words,
\begin{equation}\label{Eq7stst}
|a-b| \leqslant C_r^R\,  |x_0-y_0|\, , \quad \textnormal{ whenever } x_0 \in \mathfrak X \textnormal{ and } y_0 \in \Upsilon
\end{equation}
\end{lemma}
\begin{proof}
{\bf Case 1.} Suppose $x_0$ and $y_0$ lie in the opposite side of the circle $\mathbb S_\rho=\{z \colon |z|=\rho\}$. For example,  $|x_0| \leqslant \rho \leqslant |y_0|$. Then,
\begin{eqnarray*}
|a-b| &\leqslant & |a-x_0|+|x_0-y_0|+|y_0-b| \\
&\leqslant & C_r^R \big( |a|-|x_0|  \big) + |x_0 -y_0| + C_r^R \big( |y_0|-|b| \big) \\
&=& C_r^R  \big( |y_0|-|x_0| \big) +|y_0-x_0| \\
&\leqslant & (1+ C_r^R) |y_0-x_0|
\end{eqnarray*}
Here in the second line  we have used the reverse triangle inequality \ref{Eq122}.\\
{\bf Case 2.} Suppose both $x_0$ and $y_0$ lie inside $\mathbb S_\rho$. We use time parametrization for $\mathfrak X$ and $\Upsilon$, with $t=0$ as starting time for $x_0= x(0)$ and $y_0=y(0)$. Therefore $a=x(t)$ and $b=y(s)$ for some parameters $t$ and $s$.  We have $|x(t)|=|y(s)|$. Since the functions $t \to |x(t)|$ and $s \to |x(s)|$ are increasing,   it follows that $t \geqslant 0$ and $s \geqslant 0$. We may assume without loss of generality that $0 \leqslant s \leqslant t$.  Thus the point $x(s)$ lies in $\mathfrak X$.  Clearly,
\[ r \leqslant |x_0| =|x(0)| \leqslant |x(s)| \leqslant |x(t)| =|a| \leqslant \rho \]
Hence,  $x(s)\in \mathfrak X \cap \mathbb C^R_r$. Now  using the reverse triangle inequality (\ref{Eq122})  we obtain
\begin{eqnarray*}
|a-b| &=& |x(t)-y(s)| \leqslant |x(t)-x(s)|+|x(s)-y(s)| \\
&\leqslant & C_r^R \big( |x(t)|-|x(s)| \big) + |x(s)-y(s)|   \\
&=& C_r^R \left(|y(s)|-|x(s)|\right) +|x(s)-y(s)|\\
&\leqslant & (1+C_r^R)\,   |x(s)-y(s)|\\
 &\le  & (1+C_r^R)\, C_r^R \, |x(0)-y(0)|
\end{eqnarray*}
where in the last step we have appealed to (\ref{EqI113}).

In much the same way we prove (\ref{Eq7stst}) when both $x_0=x(0)$ and $y_0=y(0)$ lie outside $\mathbb S_\rho$. The only difference is that the parameters $t$ and $s$ will be negative.
\end{proof}
{\it Proof of Theorem \ref{Th141}.} Obviously, we have
\[| \rho_1 - \rho_2 | = \big|  |z_1|-|z_2| \big| \leqslant |z_1-z_2|  \]
Denote by $\mathfrak X , \Upsilon \subset \mathbb C_r^R$ the integral arcs which intersect the unit circle at the points $a=e^{i \theta_1}$ and $b=e^{i \theta_2}$, respectively. Thus $z_1 \in \mathfrak X$ and $z_2 \in \Upsilon$. By Lemma \ref{Lem141}, we have
\[ \left| e^{i \theta_1}- e^{i \theta_2}  \right| =|a-b| \leqslant C_r^R \dist ( \mathfrak X , \Upsilon)  \leqslant C_r^R |z_1-z_2| \]
These two inequalities prove the estimate in the right hand side of (\ref{Eq7st}).  For the opposite estimate we choose two points $z_1= (\rho_1 , e^{i \theta_1}) \in \mathfrak X$ and $ z_2= (\rho_2 , e^{i \theta_2}) \in \Upsilon$, where $r \leqslant \rho_1, \rho_2 \leqslant R$. Define $a=z_1 = (\rho_1, e^{i \theta_1}) \in \mathfrak X$ and  $b = (\rho_1, e^{i \theta_2}) \in \Upsilon$. These are points of the same distance from the origin, $|a|=|b| = \rho_1$. By Lemma \ref{Lem141}, we have
\[ |z_1 -b| =|a-b| \leqslant   C_r^R \dist ( \mathfrak X , \Upsilon)  \leqslant C_r^R \left| e^{i\theta_1} - e^{i \theta_2} \right|  \]
On the other hand, $b$ and $z_2$ belong to the same integral arc in $\mathbb C^R_r$, so by the reverse triangle inequality (\ref{Eq122})
\[|b-z_2| \leqslant C_r^R \big| |b|- |z_2|  \big| = C^R_r \left| \rho_1 - \rho_2 \right| \]
Summing up the above inequalities we obtain
\begin{eqnarray*}
|z_1-z_2| &\le & |z_1-b|+|b-{z_2}| \leqslant C_r^R \left(  |\rho_1 - \rho_2| + \left| e^{i \theta_1} - e^{i \theta_2}  \right|  \right) \\ &\le & C_r^R   \left| \rho_1 e^{i\theta_1} -  \rho_2 e^{i\theta_2} \right|
\end{eqnarray*}
as claimed. Theorem \ref{Th141} is proved.
\end{proof}
Inequality \ref{EqI15} tells us that $\Phi$ and its inverse, denoted by $\Psi = \Phi^{-1} \colon \mathbb C \to \mathbb C$, are locally Lipschitz on $\mathbb C_0$.  Therefore, both  $\Phi$ and $\Psi$ are differentiable almost everywhere.
\section{Polar equation for integral curve $\Gamma_\theta$}
The points $z\in \Gamma_\theta$ can be represented by the equation
\begin{equation}
z= \rho e^{i \varphi (\rho)}\, , \quad \quad 0< \rho < \infty
\end{equation}
where $\varphi$ solves the initial value problem
\begin{equation}
\begin{cases} \dot{\varphi} (\rho) = F(\rho , e^{i \varphi}) = \displaystyle \frac{1}{\rho} \frac{\im e^{-i\varphi} f \left( \rho e^{i \varphi}\right)}{ \re e^{-i\varphi} f \left( \rho e^{i \varphi}\right) }  \\
\varphi(1)= \theta
\end{cases}
\end{equation}
The scalar function $F: \mathbb R_+ \times \mathbb S^1 \to \mathbb R$ can be found  as follows
\begin{eqnarray*}
f(z)&=& \frac{\dtext z}{\dtext t} = \frac{\dtext z}{\rho} \cdot \frac{\dtext \rho}{\dtext t} = e^{i \varphi} \left( 1+i \rho \dot{\varphi}  \right) \cdot  \frac{\dtext |z(t)|}{\dtext t} \\
&=& \left( 1+i \rho \dot{\varphi} \right) z \cdot \re \frac{f(z)}{z}
\end{eqnarray*}
Hence,
\[\rho \, \dot{\varphi} (\rho) = \frac{\im \frac{f(z)}{z}}{\re \frac{f(z)}{z}}\]
and
\begin{equation}
F(\rho, e^{i \varphi}) =   \frac{1}{\rho} \frac{\im e^{-i\varphi} f \left( \rho e^{i \varphi}\right)}{ \re e^{-i\varphi} f \left( \rho e^{i \varphi}\right) }
\end{equation}
The single equation just established for $\varphi$ is no longer  autonomous.  But it can be  useful   for a discussion of geometric properties of the integral curves.
\section{Integrating factor, Proof of Theorem \ref{ThMain3}}
Every complex number $z \ne 0$ has infinitely many quasipolar angles which differ from each other by multiple of $2 \pi$. These are real numbers $\theta \in \mathbb R$ such that the integral curve through the point $e^{i \theta} \in \mathbb S^1$ contains $z$. We denote by $\Theta =  \Theta (z)$
the multivalent function that assigns to $z$ its quasipolar angles. It is worth pointing out that $\Theta (z)$ has a continuous branch on every simply connected domain $\Omega \subset \mathbb C_0$. Two such branches differ by a constant. In other words, it makes sense to speak of the gradient of $\Theta$,
\[ \nabla \Theta (z) = \left(  \frac{\partial \Theta}{\partial x}\, ,\,  \frac{ \partial \Theta}{\partial y}  \right) \]
where we have chosen a continuous branch of $\Theta$ near the given point $z\in \mathbb C_0$. By the definition of the map $\Phi \colon \mathbb C_0 \to \mathbb C_0$, we have the identity
\[ e^{i \Theta  } = \frac{\Phi (z)}{|\Phi (z)|}  \]
Inequality (\ref{EqI15})   shows that for   almost every $z\in \mathbb C_0$
\begin{equation}
0 < \mathfrak m \big(|z|\big) \leqslant \left|  \nabla \Theta (z)\right| \leqslant \mathfrak M \big( |z|\big) < \infty
  \end{equation}
Here the lower and upper bounds are given by continuous functions. Of course these functions blow up at the critical point $z=0$ and at $z=\infty$. Any continuous branch of $\Theta$ along an integral curve  $\Gamma_\theta$ is constant, namely $\Theta \equiv \theta + 2 \pi k$. We differentiate $\Theta (z)$ along $\Gamma_\theta$ to obtain
\[  \quad  f(z) \, \Theta_z + \overline{f(z)}\,  \Theta_{\bar z} =0  \quad \mbox{ where } \Theta_{\bar z} = \overline{\Theta_z}  \]
Hence
\begin{equation}\label{Eq172}
if(z)= \pm \frac{|f|}{|\Theta_z|} \Theta_{\bar z}
\end{equation}
We then see that the vector field
\begin{equation}\label{Eq173}
V(x,y)=if(x+iy)
\end{equation}
takes the form
\begin{equation}\label{Eq174}
V(x,y)= \lambda   \nabla \Theta \quad \quad \textnormal{ with } \lambda (z)= \pm \frac{|f(z)|}{2 |\Theta_z (z)|}
\end{equation}
It is not difficult to see that the sign is plus. Indeed, since $\Theta$ is increasing in the direction counterclockwise on every circle $\mathbb S_\rho =\{ \rho e^{i \theta} \colon 0 \le \theta \le 2 \pi \}$, we have
\[0 \le \frac{\partial \Theta (z)}{\partial \theta} = 2 |z|^2 \im \frac{\Theta_{\bar z}}{z}\]
On the other hand
\[0 \le \re \frac{f(z)}{z}  = \im \frac{i f(z)}{z} = \pm \frac{|f|}{|\Theta_z|} \im \frac{\Theta_{\bar z}}{z} \]
Thus $\lambda (z)>0$, almost everywhere.
From (\ref{Eq172}) we deduce that the real valued function $\lambda$, called the integrating factor, is uniformly bounded from below and from above
\begin{equation}\label{Eq175}
  m \big(|z|\big) \leqslant  \lambda (z) \le    M  \big(|z|\big)
\end{equation}
where $ m(t)$ and $ M (t)$ are positive continuous function in $\mathbb R_+$. Equation (\ref{Eq174}) simply means that $f$ is orthogonal to the gradient field $\nabla \Theta$.
\section{Remarks}
\begin{remark}\label{rem111}
As one moves along a trajectory of a field $f\in \mathcal F_K(d)$ in the positive direction its distance $|x(t)|$ to the critical point  strictly increases to $\infty$. It is a simple matter to see that the length of the tangent vector $\left| \dot{x} (t)\right|$ also increases. Indeed, for sufficiently small $h$ we can write
\[\frac{\dtext }{\dtext t} \left| x(t+h)-x(t) \right|^2 = 2 \langle \dot{x}(t+h)- \dot{x}(t), x(t+h)-x(t)  \rangle \ge 0\]
Hence
\[|x(t+h)-x(t)| \ge |x(s+h)-x(s)| \, , \quad \textnormal{for } t \ge 0\]
We divide by $h$ and let it go to zero to conclude that
\[\left| \dot{x}(t) \right| \ge \left| \dot{x}(s) \right|\]
or, equivalently
\[\left| f(x(t)) \right| \ge \left| f(x(s)) \right| \]
\end{remark}
\begin{remark}\label{rem112}
It is interesting to note that the orthogonal trajectories, locally defined by the autonomous system
\begin{equation}
\dot{w}(t)=if\big( w(t)\big)\, , \quad \quad \alpha < t < \beta
\end{equation}
have well defined curvature at almost every $t\in (\alpha, \beta)$, and it is nonnegative. To carry out the details of this observation we call on the classical formula for the curvature of $C^2$-simple arc $w=w(t)$
\begin{equation}
k=\im \frac{\ddot{w} \, \overline{\dot{w}}}{  \left| \dot{w} \right|^3} = \frac{1}{ \left| \dot{w} \right|}\im \frac{\ddot{w}}{\dot{w}}=  \frac{1}{ \left| \dot{w} \right|} \frac{\dtext}{\dtext t} \left[ \arg \dot{w} \right]
\end{equation}
On the other hand, when $w=w(t)$ is the orthogonal trajectory of $f$,  we may perform the following computation
\[\re \frac{f \big(w(t+h) \big) - f \big(w(t) \big)}{ w(t+h)  -w(t)} \ge 0\]
by monotonicity of $f$. This translates into the inequality
\[  \im \frac{\dot{w}(t+h) - \dot{w}(t)}{w(t+h)-w(t)} \ge 0\]
or, equivalently
\[  \frac{\dtext}{\dtext t} \arg \left[ w(t+h)-w(t) \right]  \ge 0\]
This means that the function $t \to \arg [w(t+h)-w(t)]$ is increasing, so we may write
\[\arg \frac{w(t+h)-w(t)}{h} \ge \arg \frac{w(s+h)-w(s)}{h} \]
for $t \ge s$ and $h$ sufficiently small. Letting $h$ go to zero we arrive at the inequality
\[\arg \dot{z}(t) \ge \arg \dot{z}(s)\, , \quad \quad t \ge s\]
This shows that the function $t \to \arg \dot{z}(t)$ is increasing and, as such, has a nonnegative derivative at almost every point $t\in (\alpha, \beta)$. Now we can define the curvature by the rule
\begin{equation}
k = \frac{1}{\left|\dot{w}(t)\right|} \frac{\dtext}{\dtext t} \left[\arg \dot{w}(t)\right] \ge 0
\end{equation}
or
\begin{equation}
k= \frac{1}{|f(w)|} \frac{\dtext}{\dtext t} \left[\arg f(w)\right]
\end{equation}
\end{remark}

\section{Variation of $\delta$-monotone mappings along $C^1$-arcs, proof of Theorem \ref{Th41}}\label{SecDMR}
\begin{proof}
Let $\Gamma= \{x(t) \colon -\epsilon <t < \epsilon \}$, where $x\colon  (-\epsilon , \epsilon ) \to \Omega$ is a $C^1$-parametrization of $\Gamma$. Here we assume that $\dot{x}(0) \ne 0$. In particular, $x(t) \ne x(s)$ whenever the parameters $t \ne s$ are close to $0$; that is, we assume that $\epsilon$ is sufficiently small.   We shall construct a strictly increasing homeomorphism $\tau \colon (-\eta, \eta )
\overset{\textnormal{\tiny{into}}}{\longrightarrow} (-\epsilon , \epsilon)$, $\tau (0)=0$, such that the function $s \to f \big(x(\tau (s))\big)$ becomes Lipschitz continuous. Obviously, this is enough to claim that $f(\Gamma)$ is rectifiable near the given point $f\big(x(0)\big)$. By means of translation of $\Omega $ and $f(\Omega)$ we may (and do) assume that $x(0)=0$ and $f(0)=0$. Consider two parameters in $(-\epsilon , \epsilon)$, say $-\epsilon < s <t < \epsilon$. In view of $\delta$-monotonicity, we have
\begin{equation}\label{Eq41}
\left\langle f \big(x(t)\big) - f \big(x(s)\big) , \frac{x(t)-x(s)}{|x(t)-x(s)|} \right\rangle \geqslant \delta \left|  f \big(x(t)\big) - f \big(x(s)\big)  \right|
\end{equation}
On the other hand, since $x\in C^1(-\epsilon , \epsilon)$ we also have
\begin{equation}
\lim\limits_{t,s \to 0} \left| \frac{x(t)-x(s)}{|x(t)-x(s)|}  - \frac{\dot{x}(0)}{|\dot{x}(0)|}  \right|=0
\end{equation}
In particular, we find an interval $(-\epsilon^\prime, \epsilon^\prime) \subset (-\epsilon , \epsilon)$ such that
\begin{equation}
 \left| \frac{x(t)-x(s)}{|x(t)-x(s)|}  - \frac{\dot{x}(0)}{|\dot{x}(0)|}  \right| \leqslant \frac{\delta }{2}
\end{equation}
for all  $s,t \in (-\epsilon^\prime, \epsilon^\prime)$. This condition together  with (\ref{Eq41}) yields
\begin{equation}\label{Eq42}
\left\langle f \big(x(t)\big) - f \big(x(s)\big) , \frac{\dot{x}(0)}{|\dot{x}(0)|} \right\rangle \geqslant \frac{\delta}{2} \left|  f \big(x(t)\big) - f \big(x(s)\big)  \right| >0
\end{equation}
It shows that the function $\varphi(t) = \left\langle f \big(x(t)\big) , \frac{\dot{x}(0)}{|\dot{x}(0)|} \right\rangle $ is strictly increasing for $\epsilon^\prime <t < \epsilon^\prime$. It vanishes at $t=0$. The image of the interval $ (-\epsilon^\prime, \epsilon^\prime)$ under $\varphi$ covers an interval $(-\eta, \eta)$. Let $\tau= \tau(s)$ denote the inverse of $\varphi$, defined for $-\eta < s < \eta$. By the definition
\begin{equation}
\left\langle f \big(x(\tau (s))\big) , \frac{\dot{x}(0)}{|\dot{x}(0)|} \right\rangle =s \, , \quad \quad -\eta < s < \eta
\end{equation}
It is this  function $\tau (s)$ that gives us a Lipschitz parametrization of $f(\Gamma)$. We set
\begin{equation}
y(s)=f \big(x(\tau (s))\big) \in f(\Gamma) \, , \quad \quad -\eta < s < \eta
\end{equation}
Then $y$ satisfies a Lipschitz condition with constant $\frac{2}{\delta}$. Indeed,  for $s_1>s_2$, in view of (\ref{Eq42}), we have
\begin{eqnarray}
\left|y(s_1) -y(s_2)\right| &=&  \left|f \big(x(\tau (s_1))\big) - f \big(x(\tau (s_2))\big) \right| \nonumber \\
&\leqslant & \frac{2}{\delta} \left\langle f \big(x(\tau (s_1))\big) - f \big(x(\tau (s_2))\big) , \frac{\dot{x}(0)}{|\dot{x}(0)|} \right\rangle \nonumber \\
&=& \frac{2}{\delta} (s_1-s_2) \nonumber
\end{eqnarray}
as derired.
\end{proof}

\section{Quasiconformal fields  of bounded variation on  $C^1$-curves, proof of Theorem \ref{ThMain4}}\label{SecUniC1toRe}
\begin{proof}
It suffices to prove forward uniqueness. The backward uniqueness follows by considering the field $-f$. We choose $\epsilon >0$ small enough to satisfy
\begin{equation}\label{Eq201}
\left|f\big(x(t)\big)-f(a) \right| + \left|f\big(y(t)\big)-f(a) \right| \le \frac{1}{2} \min \{1, |f(a)|\}
\end{equation}
for all $0 \le t \le \epsilon$.
Suppose, to the contrary, that for  some $0< t_0 < \epsilon$ we have $x(t_0) \ne y(t_0)$. Consider the sequence $t_0 > t_1 > \cdots t_k >t_{k+1} \cdots t_\infty \ge 0$ constructed in Partition Lemma \ref{ApLe1}. Accordingly, we have
\begin{equation}\label{Eq203}
\underset{t_{k+1} \le t,s \le t_k}{\inf} |x(t)-x(s)| =t_k -t_{k+1}\, , \quad x(t_\infty)= y(t_\infty)
\end{equation}
Let us denote by $x_k =x(t_k)$ and $y_k =y(t_k)$. For each $k$ we compute
\begin{eqnarray}\label{Eq204}
\log \frac{|x_k-y_k|}{|x_{k+1}-y_{k+1}|}&=& \int_{t_{k+1}}^{t_k} \frac{\dtext }{\dtext t} \log |x(t)-y(t)|\, \dtext t\nonumber \\
&\le & \int_{t_{k+1}}^{t_{k}} \frac{ \left| \dot{x} (t) - \dot{y}(t) \right|   }{\left|  {x} (t) -  {y}(t) \right|} \, \dtext t = \int_{t_{k+1}}^{t_{k}} \frac{ \left| f \big( x(t) \big) - f \big( y(t) \big) \right|   }{\left|  {x} (t) -  {y}(t) \right|} \, \dtext t \nonumber \\
&\le& \frac{1}{t_k -t_{k+1}} \int_{t_{k+1}}^{t_k} \left| f\big(x(t)\big)- f\big(y(t)\big)  \right| \, \dtext t
\end{eqnarray}
We claim that
\begin{equation}\label{Eq205}
\left| f\big(x(t)\big)- f\big(y(t)\big) \right| \le C \left| f(x_k)-f(x_{k+1}) \right|\, , \quad t_{k+1} \le t \le t_k
\end{equation}
with a constant independent of $k$. To this end, observe that the expressions
 $$|\dot{x}(t)|=|f(x(t))| \quad \mbox{and} \quad  |\dot{y}(t)|=|f(y(t))|$$ are bounded by $\frac{3}{2}|f(a)|$. Hence
\begin{eqnarray*}
\underset{t_{k+1} \le t,s \le t_k}{\sup} |x(t)-y(s)| &\le & \underset{t_{k+1} \le t,s \le t_k}{\inf} |x(t)-y(s)| + 3|f(a)||t_k-t_{k+1}| \\
&=& \left( 1+3|f(a)| \right) (t_k -t_{k+1})
\end{eqnarray*}
Also $|x_k-y(t)| \ge t_k -t_{k+1}$, by (\ref{Eq203}).

Next we see that
\[x_{k+1}-x_k = \int_{t_{k+1}}^{t_k} \left[ f \big( x(\tau)\big) -f(a) \right]\, \dtext \tau + (t_{k}-t_{k+1}) f(a)  \]
Hence
\begin{eqnarray*}
\left|   x_{k+1}-x_k \right| &\ge & (t_{k}-t_{k+1}) |f(a)| - \frac{1}{2} |f(a)| (t_k-t_{k+1}) \\ &=& \frac{1}{2} (t_{k}-t_{k+1}) |f(a)|
\end{eqnarray*}
Now, the three point condition yields
\begin{eqnarray*}
\frac{ \left| f \big( x(t) \big) - f \big( y(t) \big) \right| }{  \left| f (x_k) - f (x_{k+1}) \right| } &\le& \frac{ \left| f \big( x(t) \big) - f \big( y(t) \big) \right| }{  \left| f (x_k) - f \big( y(t) \big)\right| } \cdot \frac{ \left| f (x_k)- f \big( y(t) \big) \right| }{  \left| f (x_k) - f (x_{k+1}) \right| }  \nonumber \\
&\le& M_K \left( \frac{|x(t)-y(t)|}{|x_k-y(t)|} \right) \cdot  M_K \left( \frac{|x_k-y(t)|}{|x_k-x_{k+1})|} \right) \nonumber \\
&\le & M_K \left(  {1+3|f(a)|}  \right) \cdot  M_K \left( \frac{2}{|f(a)|} +6 \right) \nonumber \\
&=& C
\end{eqnarray*}
This proves (\ref{Eq205}).

We now substitute (\ref{Eq205}) into (\ref{Eq204}) to obtain
\begin{equation}
\log \frac{|x_k-y_k|}{|x_{k+1}-y_{k+1}|} \le C \, \left| f(x_{k+1}) -f(x_k) \right|
\end{equation}
Using telescoping structure on the left hand side we compute
\begin{equation}
\log \frac{|x_0-y_0|}{|x_{\ell}-y_{\ell}|} \le C \sum_{k=0}^\ell \left| f(x_{k+1}) -f(x_k) \right|
\end{equation}
Finally, letting $\ell$ go to infinity we see that the left hand side approaches $\infty$, because
\[x_\ell -y_\ell \to x(t_\infty) -y(t_\infty) =0\]
However, the right hand side is bounded by the total variation of $f$ along the $C^1$-curve $x=x(t)$. This contradiction completes the proof of Theorem \ref{ThMain4}.
\end{proof}

\section{Examples}
\begin{example}\label{Ex1}
Consider the complex function
\begin{equation}
f(z)= \frac{10 z}{\sqrt{|z|}} \, , \quad z=x_1+ix_2
\end{equation}
It satisfies the   reduced Beltrami equation
\begin{equation}
f_{\bar z} = \mu (z) \re f_z\, , \quad \mu (z)= -   \frac{1}{3}\,  \frac{z}{\bar z}
\end{equation}
and is $\delta$-monotone. Nevertheless, there are two integral curves passing through the origin
\begin{equation}
z^\pm (t)= \begin{cases} (24 \pm 7i)\,  t^2 & \quad  \textnormal{if} \quad   t \geqslant 0 \\
0 & \quad \textnormal{if} \quad t \leqslant 0  \end{cases}
\end{equation}
Note that $f$ is H\"older continuous with exponent $\alpha = \frac{1}{2}$.
\end{example}

\begin{example}\label{Ex111}
It is not difficult to construct a (nondegenerate) reduced $K$-quasiconformal field $f$ for which any factorization of the form
\[i\, f(z)= \lambda (z) \nabla U(z)\, , \quad \lambda (z)\in \mathbb R \quad U\mbox{-locally Lipschitz in } C_0 \]
does not allow $\lambda$ to be continuous, equivalently $U$ to be $C^1$-smooth. Set
\begin{equation}
f(z)= \begin{cases} 2z & \im z \geqslant 0 \\
3z - \bar z \; \; & \im z \leqslant 0 \end{cases}
\end{equation}
Indeed, $U$ must be constant on every integral curve of $f$, among which are half lines
\[ y=cx \quad c \ge 0 \quad x \ge 0\]
and
half-parabolas
\[ y=cx^2 \quad c \le 0 \quad x \ge 0\]
This forces $U$ to be of the form $U(x,y)=A(y/x)$ in the first quadrant and  $U(x,y)=B(y/x^2)$ in the fourth quadrant.
It follows that $\lim\limits_{y\to 0+} U_y (x,y) = a/x$ and $\lim\limits_{y\to 0-} U_y (x,y) = b/x^2$ where $a$ and $b$ are nonzero constants because $U_x(x,0) \equiv 0$. This contradicts the smoothness of $U$.
\end{example}

\bibliographystyle{amsplain}

\end{document}